\newtheorem{teo}{Theorem}
\newtheorem{lemma}[teo]{Lemma}
\newtheorem{prop}[teo]{Proposition}
\theoremstyle{remark}
\newtheorem{rem}[teo]{Remark}
\DeclareMathOperator{\Id}{Id}
\DeclareMathOperator{\essinf}{essinf}
\newcommand{\R}{\mathbb R}
\newcommand{\eps}{\varepsilon}
\newcommand{\D}{{\cal D}^{1,2}}
\newcommand{\Ne}{{\cal N}}
\begin{document}
\title{Existence of minimal nodal solutions for the
Nonlinear Schroedinger equations with $V(\infty)=0$}
\author{M.Ghimenti\thanks{Dipartimento di Matematica Applicata,
Universit\`a di Pisa,via Bonanno 25b, 56100, Pisa, Italy},
A.M.Micheletti\thanks{Dipartimento di Matematica Applicata,
Universit\`a di Pisa,via Bonanno 25b, 56100, Pisa, Italy}}
\maketitle
\begin{abstract}
\noindent We consider the problem $\Delta u+V(x)u=f'(u)$ in
$\R^N$.
Here the nonlinearity has a double power behavior and
$V$ is invariant under an orthogonal involution, with $V(\infty)=0$.
An existence theorem of one pair of solutions which change
sign exactly once is given.

\noindent{\bf Mathematics Subject Classification:} 35J64, 35J20

\noindent{\bf Keywords:} Nonlinear Equations, Variational Methods,
Orlicz Spaces, Sign Changing Solutions
\end{abstract}

\section{Introduction}

It is well known that stationary states of Nonlinear Schroedinger
equations lead to problems of the
type

\begin{equation}\label{PV}
\left\{
\begin{array}{ll}
\tag{$\mathscr P$}
-\Delta u+V(x)u=f'(u),&x\in\R^N;\\
E_V(u)<\infty.
\end{array}
\right.
\end{equation}

where the energy functional is defined by
\begin{equation}
E_V(u)=\frac{1}{2}\int_{\R^N}|\nabla u|^2dx+
\frac{1}{2}\int_{\R^N}V(x)u^2(x)dx-\int_{\R^N}f(u)dx.
\end{equation}

We consider a function $f\in C^2(\R,\R)$ even
with $f(0)=f'(0)=0$ that
satisfies the following requirements:
\begin{description}
\item[$(f_1)$] there exists $\mu>2$ such that
\begin{equation}\label{fmu}
0<\mu f(s)\leq f'(s)s<f''(s)s^2 \text{ for all } s\neq 0
\end{equation}
\item[$(f_2)$]
there exist positive numbers $c_0,c_2,p,q$ with $2<p<2^*<q$ such that

\begin{equation}\label{f1}
\left\{
\begin{array}{ll}
c_0|s|^p\leq f(s)&\text{ for }|s|\geq1;\\
c_0|s|^q<f(s)&\text{ for }|s|\leq1;
\end{array}
\right.
\end{equation}
\begin{equation}
\left\{
\begin{array}{ll}
|f''(s)|\leq c_2|s|^{p-2}&\text{ for }|s|\geq1 \\
|f''(s)|\leq c_2|s|^{q-2}&\text{ for }|s|\leq1 \\
\end{array}
\right.
\end{equation}
where $2^*=\frac{2N}{N-2}$.
\end{description}

We assume $V\in L^{N/2}(\R^N)$ and
\begin{equation}\label{condV}
||V||_{L^{N/2}}<S:=\inf_{u\in{\D}}\frac{\int_{\R^N}|\nabla u|^2}
{\left(\int_{\R^N}|u|^{2^*}\right)^{2/2^*}}
\end{equation}

In the case of a single-power nonlinearity some paper
has been devoted to the existence of positive solutions when
potential $V$ vanishes at infinity. Among others we recall \cite{AFM05,BC90} and
we quote the references therein.

In pioneering work Berestycki and Lions \cite{BL83a,BL83b}
showed the existence of a
positive solution in the case $V\equiv0$ when $f''(0)=0$, $f$ has a
supercritical growth near the origin and subcritical at infinity.

More recently in the papers \cite{BR,BGM04,BGM04b,BM04,Pis} the double-power
growth condition $(f_2)$ has been used to obtain the existence of positive solutions for
different problems of the tipe (\ref{PV}).
In particular, in \cite{BGM04} the authors proved that if $V\geq 0$ and $V>0$ on a set of positive
measure the problem (\ref{PV}) has no ground state solution, i.e. there is no solution
$u$ of (\ref{PV}) which minimizes the functional $E_V$ on the Nehari manifold $\Ne_V$,
defined by
\begin{equation}
\Ne_V=\left\{u\in\D\::\ \langle\nabla E_V(u),u\rangle=0, u\neq 0\right\}.
\end{equation}
On the contrary there exists a ground state solution either if $V\leq0$ and
$V<0$ on a set of positive measure, or $V\equiv0$.

In this paper we are interested in the existence of sign changing solutions.
Besides the difficulty posed by the lack of compactness we have another problem:
there is  no natural regular constraint for sign changing solution of problem
(\ref{PV}). To overcome this difficulty we consider the problem

\begin{equation}\label{PVtau}
\left\{
\begin{array}{ll}
\tag{$\mathscr P_\tau$}
-\Delta u+V(x)u=f'(u),&x\in\R^N;\\
E_V(u)<\infty;\\
u(\tau x)=-u(x),
\end{array}
\right.
\end{equation}
where $\tau$ is a non trivial orthogonal involution that is a linear orthogonal
transformation on $\R^N$ such that $\tau\neq\Id$ and $\tau^2=\Id$ ($\Id$
being the identity on $\R^N$).

We assume $V(\tau(x))=V(x)$.

By the nontrivial orthogonal involution $\tau$ on $\R^N$ we can define a
self adjoint linear isometry on $\D$ which we also denote $\tau$. We define
\begin{equation}
\begin{array}{l}
\tau:\D\rightarrow\D;\\
(\tau u)(x):=-u(\tau(x)).
\end{array}
\end{equation}
If $u(\tau x)=-u(x)$, it will be called $\tau$-antisymmetric. Note that non trivial
antisymmetric solutions are changing sign or nodal solutions. Nodal solutions which
change sign exactly once will be called minimal nodal solutions.

We define
\begin{eqnarray*}
\Ne_V^\tau=\Ne_V\cap\D_\tau&\text{where}&\D_\tau=\{u\in\D\ :\ \tau u=u\}.
\end{eqnarray*}

The non trivial antisymmetric solutions of (\ref{PVtau}) are the
critical points of $E_V$ on $\Ne_V^\tau$

We set now
\begin{eqnarray}
\mu_V=\mu_V(\R^N):=\inf_{\Ne_V} E_V;&&
\mu_0=\mu_0(\R^N):=\inf_{\Ne_0} E_0;\\
\mu_V^\tau=\mu_V^\tau(\R^N):=\inf_{\Ne_V^\tau} E_V;&&
\mu_0^\tau=\mu_0^\tau(\R^N):=\inf_{\Ne_0^\tau} E_0.
\end{eqnarray}

We shall prove the following results.
\begin{teo}\label{mainVpos}
If $V(x)>0$ for almost every $x$,
then
\begin{displaymath}
\mu_V^\tau=\mu_0^\tau=2\mu_0,
\end{displaymath}
and $\mu_V^\tau$ is not achieved. Then the problem (\ref{PVtau}) has
no solution of minimal energy.
\end{teo}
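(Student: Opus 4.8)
The plan is to establish the chain $2\mu_0\le\mu_0^\tau\le\mu_V^\tau\le2\mu_0$, so that $\mu_V^\tau=\mu_0^\tau=2\mu_0$, and then to exclude a minimizer by invoking the non-existence of a ground state for $E_V$ recalled from \cite{BGM04}. First I would record the usual consequence of $(f_1)$: for each nonzero $u$ in the energy space and each $W\in\{0,V\}$ there is a unique $t_W(u)>0$ with $t_W(u)u\in\Ne_W$, and $E_W(t_W(u)u)=\max_{t>0}E_W(tu)$; hence $\mu_W=\inf_{u\neq0}\max_{t>0}E_W(tu)$, and, since $\D_\tau$ is a linear subspace, $\mu_W^\tau=\inf_{u\in\D_\tau\setminus\{0\}}\max_{t>0}E_W(tu)$. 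Because $V\ge0$, $E_V(tu)\ge E_0(tu)$ for all $t$, so $\mu_V\ge\mu_0$; and for any $u\in\Ne_V^\tau\subset\D_\tau$, writing $t^*=t_0(u)$ so that $t^*u\in\Ne_0^\tau$, we get $E_V(u)\ge E_V(t^*u)\ge E_0(t^*u)\ge\mu_0^\tau$, hence $\mu_V^\tau\ge\mu_0^\tau$. It thus remains to prove $\mu_0^\tau\ge2\mu_0$ and $\mu_V^\tau\le2\mu_0$.

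The next ingredient is a splitting lemma. If $u\in\D_\tau$, write $u=u^+-u^-$; the antisymmetry $u(\tau x)=-u(x)$ forces $u^-=u^+\circ\tau$, and since $x\mapsto\tau x$ is a measure-preserving linear isometry, $V\circ\tau=V$ and $f$ is even, the integrals over $\{u<0\}$ in the three pieces of $E_V$ equal those over $\{u>0\}$. As $\mathrm{supp}\,u^+$ and $\mathrm{supp}\,u^-$ are disjoint, everything splits: $E_V(u)=2E_V(u^+)$ and $\langle\nabla E_V(u),u\rangle=2\langle\nabla E_V(u^+),u^+\rangle$, so $u\in\Ne_V^\tau$ implies $u^+\in\Ne_V$ (with $u^+\neq0$, since $u^+\equiv0$ would force $u\le0$ a.e. and then $u\equiv0$ by antisymmetry). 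Taking $V=0$ gives $E_0(u)=2E_0(u^+)$ and $u\in\Ne_0^\tau\Rightarrow u^+\in\Ne_0$, whence $\mu_0^\tau\ge2\inf_{\Ne_0}E_0=2\mu_0$.

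For the matching upper bound — which I expect to be the main obstacle — I would build a two-bump competitor. Let $w\ge0$ be a ground state of $E_0$ (it exists by the $V\equiv0$ case recalled in the Introduction), truncate it to a compactly supported $w_\delta$ with $\max_{t>0}E_0(tw_\delta)\to\mu_0$ as $\delta\to0$, pick $e\in\R^N$ with $|e|=1$ and $\tau e=-e$ (possible since $\tau\neq\Id$), and set $v_{R,\delta}(x)=w_\delta(x-Re)-w_\delta(\tau x-Re)\in\D_\tau$, a positive bump near $Re$ minus its $\tau$-reflection near $-Re$. For $R$ large these bumps have disjoint supports, so $\int|\nabla v_{R,\delta}|^2=2\int|\nabla w_\delta|^2$ and $\int f(tv_{R,\delta})=2\int f(tw_\delta)$, i.e. $E_0(tv_{R,\delta})=2E_0(tw_\delta)$; projecting onto $\Ne_0^\tau$ already gives $\mu_0^\tau\le2\max_{t>0}E_0(tw_\delta)\to2\mu_0$. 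The only new term for $E_V$ is $\int Vv_{R,\delta}^2$, which by H\"older, $V\in L^{N/2}$ and the fact that $\mathrm{supp}\,v_{R,\delta}$ runs off to infinity obeys $\int Vv_{R,\delta}^2\le\|V\|_{L^{N/2}(\mathrm{supp}\,v_{R,\delta})}\|v_{R,\delta}\|_{2^*}^2\to0$ as $R\to\infty$; since the gradient part of $v_{R,\delta}$ is fixed and the potential part is bounded, $t_V(v_{R,\delta})$ stays in a compact interval, so $E_V(t_V(v_{R,\delta})v_{R,\delta})=E_0(t_V(v_{R,\delta})v_{R,\delta})+\tfrac{1}{2}t_V(v_{R,\delta})^2\int Vv_{R,\delta}^2\le2\max_{t>0}E_0(tw_\delta)+o_R(1)$. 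Letting $R\to\infty$ and then $\delta\to0$ yields $\mu_V^\tau\le2\mu_0$, so $\mu_V^\tau=\mu_0^\tau=2\mu_0$. The delicate points here are reconciling the loss of compactness with the non-homogeneity of $f$ and keeping $t_V(v_{R,\delta})$ under control; truncating $w$ to compact support is precisely what makes the identities above exact rather than merely asymptotic.

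For the non-attainment I would argue by contradiction. If $u\in\Ne_V^\tau$ achieved $\mu_V^\tau=2\mu_0$, the splitting lemma would give $u^+\in\Ne_V$ with $E_V(u^+)=\tfrac{1}{2}E_V(u)=\mu_0$; since $\mu_0\le\mu_V\le E_V(u^+)=\mu_0$, the function $u^+$ would realize $\inf_{\Ne_V}E_V$, hence (the Nehari manifold being a natural constraint under $(f_1)$) would be a critical point of $E_V$, i.e. a ground state solution of (\ref{PV}) — contradicting \cite{BGM04}, which applies here because $V>0$ a.e. Therefore $\mu_V^\tau$ is not attained and (\ref{PVtau}) has no solution of minimal energy.
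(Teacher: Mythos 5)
Your proposal is correct, and its skeleton --- the sandwich $2\mu_0\le\mu_0^\tau\le\mu_V^\tau\le 2\mu_0$ with the two-bump antisymmetric competitor $w_\delta(\cdot-Re)-w_\delta(\tau\cdot-Re)$, $\tau e=-e$, supplying the upper bound --- is exactly the paper's (the competitor is Lemma \ref{main}, whose Steps II--IV carry out the truncation, the decay $\int Vv^2\to0$, and the control of the projection parameter $t$ that you flag as the delicate points). You diverge from the paper in two sub-steps, both legitimately. For $\mu_V^\tau\ge\mu_0^\tau$ you project a point of $\Ne_V^\tau$ \emph{down} onto $\Ne_0^\tau$ and use only $V\ge0$ together with $E_V(u)=\max_{t>0}E_V(tu)$; the paper instead projects $\Ne_0^\tau$ \emph{up} onto $\Ne_V^\tau$ and invokes the strict monotonicity of $s\mapsto\int\frac12 f'(su)su-f(su)\,dx$ coming from $(f_1)$. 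Your version is shorter and needs less structure from $f$. For non-attainment the paper gives a self-contained argument: a minimizer $v\in\Ne_V^\tau$ has $\langle\nabla E_0(v),v\rangle<0$, hence $t_v^0<1$, and the same monotonicity yields $E_0(t_v^0v)<E_V(v)=\mu_0^\tau$ with $t_v^0v\in\Ne_0^\tau$, a contradiction. You instead split off $u^+$, show it would minimize $E_V$ on $\Ne_V$ at level $\mu_0=\mu_V$, and appeal to the non-existence of ground states for $V\ge0$, $V>0$ on a set of positive measure, from \cite{BGM04}. That is a valid reduction (the paper quotes exactly this result in the introduction, and the natural-constraint property follows from $\bar t^2g_u''(\bar t)<0$ in Remark \ref{lemma4.1}), but it outsources the contradiction to an external theorem whose own proof is essentially the paper's one-line projection argument; the paper's route is therefore more economical and self-contained.
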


We consider the following class of potentials.

\begin{equation}\label{defVy}
V_y(x)=
\left\{
\begin{array}{cll}
a|x-y|-1&&|x-y|<1;\\
a|x-\tau y|-1&&|x-\tau y|<1;\\
0&&\text{elsewhere}
\end{array}
\right.
\end{equation}
where $a\in\R$ is chosen such that $||V||_{L^{N/2}}<S$, $S$ as in (\ref{condV}).
We can prove the following existence result.

\begin{teo}\label{mainVneg}
For the potential $V_y$ such that $|y-\tau y|$ is sufficiently large we have that
$\mu_V^\tau<\mu_0^\tau$  and it is achieved. Then the problem
(\ref{PVtau}) has at least one
pair of antisymmetric solutions which change sign exactly once, and the energy of these
solutions is minimal.
\end{teo}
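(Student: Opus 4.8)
The plan is to obtain the strict inequality $\mu_{V_y}^\tau<\mu_0^\tau$ by a ``dumbbell'' test function, and then to run a concentration--compactness argument in which the involution $\tau$ doubles the energy that any escaping mass must pay, so that this strict inequality rules out the loss of compactness along minimizing sequences on $\Ne_{V_y}^\tau$. The first ingredient is a one--bump comparison. Write $W$ for the single--centre profile of $V_y$, so that $V_y(x)=W(x-y)+W(x-\tau y)$ once $|y-\tau y|>2$, the parameter $a$ in (\ref{defVy}) being chosen so that $W\le0$, $W\not\equiv0$ and $\|V_y\|_{L^{N/2}}<S$. By the results recalled in the Introduction, the problem with potential $W$ has a positive ground state $z_W\in\Ne_W$ with $E_W(z_W)=\mu_W$, and the case $V\equiv0$ has a positive ground state $z_0\in\Ne_0$ with $E_0(z_0)=\mu_0$. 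Projecting $z_0$ radially onto $\Ne_W$ and using $\int_{\R^N}Wz_0^2<0$ together with $E_W(tz_0)\le E_0(tz_0)$ for every $t>0$ (as $W\le0$), one gets
\[
\mu_W\le\max_{t>0}E_W(tz_0)<\max_{t>0}E_0(tz_0)=E_0(z_0)=\mu_0,
\]
so that $\mu_W<\mu_0$.

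Next I would prove $\mu_{V_y}^\tau<\mu_0^\tau$. Put $d:=|y-\tau y|$ and use the $\tau$--antisymmetric competitor $u_d(x)=z_W(x-y)-z_W(\tau x-y)$, a positive bump near $y$ together with its $\tau$--mirror near $\tau y$. Ground states of the limit equation $-\Delta z=f'(z)$ with compactly supported potential decay like $|x|^{2-N}$ (which is consistent with $z_W\in\D$ precisely because $q>2^*$), so the gradient, nonlinear and potential cross terms between the two bumps are all $o(1)$ as $d\to\infty$; projecting $u_d$ radially onto $\Ne_{V_y}$ (a projection that keeps it inside $\D_\tau$) then gives $\mu_{V_y}^\tau\le 2\mu_W+o(1)$. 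Since $\mu_0^\tau=2\mu_0$ (Theorem~\ref{mainVpos}) and $\mu_W<\mu_0$, this yields $\mu_{V_y}^\tau<\mu_0^\tau$ for $d$ large; the analogous one--bump computation also gives $\mu_{V_y}\to\mu_W$, hence $\mu_{V_y}^\tau<4\mu_{V_y}$ for $d$ large, which I shall use at the end.

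The heart of the proof is compactness. Take a minimizing sequence $(u_n)\subset\Ne_{V_y}^\tau$ for $E_{V_y}$; by Ekeland's principle it may be assumed to be a Palais--Smale sequence for $E_{V_y}$ restricted to $\D_\tau$, and by $(f_1)$ together with $\|V_y\|_{L^{N/2}}<S$ it is bounded in $\D$, so $u_n\rightharpoonup u$ with $u\in\D_\tau$ and, by the principle of symmetric criticality, $E_{V_y}'(u)=0$; consequently $E_{V_y}(u)\ge0$. A Benci--Cerami--type splitting lemma, the problem at infinity being the translation--invariant $-\Delta w=f'(w)$, then shows that if $u_n\not\to u$ in $\D$ the lost mass breaks into finitely many nonzero solutions of the limit problem; the $\tau$--antisymmetry is decisive here, since every escaping event must cost at least $2\mu_0$: a bubble that leaves transversally to the fixed subspace of $\tau$ drags along its $\tau$--mirror ($\ge2\mu_0$), while a bubble that leaves along that subspace, recentred there, is itself a nonzero $\tau$--antisymmetric solution of the limit problem ($\ge\mu_0^\tau=2\mu_0$). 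Hence $\mu_{V_y}^\tau=E_{V_y}(u)+(\text{energy lost})\ge0+\mu_0^\tau$, contradicting the previous step. Therefore $u_n\to u$ strongly in $\D$; moreover $u\neq0$, since $E_{V_y}\ge c>0$ on $\Ne_{V_y}$, so $u\in\Ne_{V_y}^\tau$ and $E_{V_y}(u)=\mu_{V_y}^\tau$ is achieved. By symmetric criticality $u$ solves (\ref{PVtau}), and $\{u,-u\}$ is the announced pair of minimal--energy antisymmetric solutions.

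It remains to see that $u$ changes sign exactly once. As $u$ is $\tau$--antisymmetric, $u^-=u^+\circ\tau$, and, $V_y$ being $\tau$--invariant, $E_{V_y}(u)=2E_{V_y}(u^+)$. If $\{u>0\}$ had $k\ge2$ connected components $\Omega_1,\dots,\Omega_k$, then testing $E_{V_y}'(u)=0$ against $u\chi_{\Omega_i}\in\D$ shows $u\chi_{\Omega_i}\in\Ne_{V_y}$ for each $i$, whence $\mu_{V_y}^\tau=2\sum_{i=1}^kE_{V_y}(u\chi_{\Omega_i})\ge2k\mu_{V_y}\ge4\mu_{V_y}$, contradicting $\mu_{V_y}^\tau<4\mu_{V_y}$. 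So $\{u>0\}$, and by symmetry $\{u<0\}$, is connected, i.e. $u$ (and $-u$) changes sign exactly once. The step I expect to be the main obstacle is the compactness argument: establishing the splitting lemma in the vanishing--potential regime and, above all, pinning down the correct energy thresholds for the escaping bubbles in the presence of the involution $\tau$ — together with the polynomial (rather than exponential) decay and interaction estimates that underlie the computation of $\mu_{V_y}^\tau$ in the second step.
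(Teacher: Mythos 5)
Your overall architecture is the paper's: a strict energy inequality $\mu_{V_y}^\tau<\mu_0^\tau$ obtained from a two-bump antisymmetric test function, the splitting lemma with the $\tau$-doubling of escaping bubbles to restore compactness below the threshold $2\mu_0=\mu_0^\tau$, and a nodal-domain count at the end. Two steps genuinely differ. First, for the strict inequality the paper does not pass through the one-well problem: it takes the ground state $w$ of the \emph{zero}-potential problem, forms $z_y(x)=w(x-y)-w(x-\tau y)$, and shows that $g^{V_y}_{z_y}(t)$ converges to $\varphi(t)=2g^0_w(t)+at^2\int(|x|-1)w^2$, whose maximum value is strictly below $2\mu_0$ because the added term is negative; this is self-contained. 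Your route via $\mu_W<\mu_0$ (ground state of the single-centre potential $W$) is legitimate — the existence of $z_W$ is exactly the quoted result of Benci--Grisanti--Micheletti for $V\le 0$, $V<0$ on a set of positive measure — and it buys the sharper asymptotic $\mu_{V_y}^\tau\le 2\mu_W+o(1)$, at the price of importing that existence theorem. (Your appeal to $|x|^{2-N}$ decay is unnecessary: the interaction terms are killed by the same tail estimates in $\D$ and $L^p+L^q$ that the paper uses, with no pointwise decay needed.)

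The one step I would not accept as written is the nodal count. You compare each component with $\mu_{V_y}$ and need $\mu_{V_y}^\tau<4\mu_{V_y}$, which you derive from the asserted asymptotic $\mu_{V_y}\to\mu_W$. The upper bound $\mu_{V_y}\le\mu_W+o(1)$ is easy, but the lower bound $\liminf\mu_{V_y}\ge\mu_W$ (or even $>\tfrac12\mu_W$) is not free: a minimizing sequence for the \emph{non}-symmetric problem can split between the two wells or escape, so this requires its own concentration--compactness argument, which you neither give nor need. The paper's count avoids this entirely: group each positive component $\Omega_i$ with its mirror $\tau\Omega_i$, so that $\omega_i:=\omega\,\chi_{\Omega_i\cup\tau\Omega_i}$ lies in $\Ne_{V_y}^\tau$ itself; then $\mu_{V_y}^\tau=\sum_iE_{V_y}(\omega_i)\ge k\,\mu_{V_y}^\tau$ forces $k=1$ using only $\mu_{V_y}^\tau>0$. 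I recommend replacing your last step by this symmetric grouping; with that change the proof closes without the unproved asymptotic for $\mu_{V_y}$.
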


We want to mention some recent work about sign changing solutions.
The existence of a sequence of
nodal solutions and some properties for the number of their nodal
domains has been obtained in \cite{BW03} considering the problem in a bounded
smooth domain $\Omega$ with $V\equiv0$, and in \cite{BLW04}
in $\R^N$ with $\essinf V>0$.

In \cite{CW04} there is a theorem of multiplicity of solutions for the problem
$-\Delta u+Vu=q(x)|u|^{p-2}u$ where $V(x)$ and $q(x)$ tend to some
positive number $V_\infty$ and $q_\infty$ respectively as $|x|\rightarrow\infty$.
However no precise information is given whether there are sign changing solutions or not.
If $V\equiv1$ and $q(x)$ suitable chosen, with $||q-1||_{\infty}$ small,
Hirano \cite{Hi01} prove the existence of at least two pairs
of sign changing solutions.

In \cite{CC03} the equation $-\Delta u+\lambda u=|u|^{p^*-2}u$, $\lambda>-\lambda_1$ on a
symmetric domain is considered and the effect of the domain topology on the number of minimal
nodal solutions in studied.

The plan of the paper is the following.

In section \ref{sezvar} we recall some technical result concerning
the appropriate function spaces required by the growth properties of $f$; the
proof of these results are contained in \cite{BF04,BGM04,BM04}.
In section \ref{sezsplitting} we prove a {\em splitting lemma} which is a variant of
a well known result of \cite{St84}; this lemma is the ingredient to handle
the problem with lack of compactness. In section \ref{sezmain} we prove our results.

We will use the following notations
\begin{itemize}
\item $u^+=\max(0,u)$; $u^-=\min(0,u)$;
\item {$\D=\D(\R^N)=$ completion of $C_0^\infty(\R^N)$ with respect to the norm
$$||u||_{\D}=\left(\int_{\R^n}|\nabla u|^2\right)^{1/2};$$}
\item $v_y(x)=v(x+y)$;
\item $\displaystyle G_V(u)=\langle\nabla E_V(u),u\rangle=\int|\nabla u|^2+V u^2-f'(u)u$
\item
$\displaystyle
g_u(t)=g^V_u(t):=E_V(tu)=\int\limits_{\R^N}\frac{t^2}{2}(|\nabla u|^2+Vu^2)-f(tu)dx$
\item $B_R=\{x\in\R^N\ :\ |x|<R \}$;
\item $B_R(z)=B(z,R)=\{x\in\R^N\ :\ |x-z|<R \}$;
\item $A^C=\R^N\smallsetminus A$, where $A\subset\R^N$.
\end{itemize}
\section{Variational Setting}\label{sezvar}

In order to study the functional $E_V$, by the growth assumption on $f$,
is useful to consider the functional space $L^p+L^q$,
where $2<p<2^*<q$.
Hereafter we recall some result contained in
\cite{BF04,BGM04,BM04}.

Given $p\neq q$, we consider the space $L^p+L^q$ made up of the
functions $v:\R^N\rightarrow\R$ such that
\begin{equation}
v=v_1+v_2\ \text{ with }\ v_1\in L^p, v_2\in L^q.
\end{equation}
The space $L^p+L^q$ is a Banach space equipped with the norm:
\begin{equation}
||v||_{L^p+L^q}=\inf\{\ ||v_1||_{L^p}+||v_2||_{L^q}\ :\ v_1\in L^p, v_2\in L^q,\
v_1+v_2=v\}.
\end{equation}
It is well known (see, for example \cite{BL76}) that $L^p+L^q$
coincides with the dual of $L^{p'}\cap L^{q'}$. Then:
\begin{equation}
L^p+L^q=\left(L^{p'}\cap L^{q'} \right)'\ \text{ with }\ p'=\frac p{p-1},\ q'=\frac q{q-1}.
\end{equation}
We recall some results useful for this paper.
\begin{rem}\label{lp+lq}
Set $\Gamma_u=\{u\in\R^N\ :\ |u(x)|\geq1 \}$. We have
\begin{enumerate}
\item if $v\in L^p+L^q$, the following inequalities hold:
\begin{eqnarray*}
&&\max\left[||v||_{L^q(\R^N\smallsetminus\Gamma_v)}-1,
\frac{1}{1+|\Gamma_v|^\frac1\tau}||v||_{L^p(\Gamma_v)}\right]\leq\\
&\leq&||v||_{L^p+L^q}\leq\\
&\leq&\max[||v||_{L^q(\R^N\smallsetminus\Gamma_v)},||v||_{L^p(\Gamma_v)}]
\end{eqnarray*}
when $\tau=\frac{pq}{q-p}$;
\item let $\{v_n\}\subset L^p+L^q$. Then $\{v_n\}$ is bounded in $L^p+L^q$ if and only
if the sequences $\{|\Gamma_{v_n}|\}$ and
$\{||v||_{L^q(\R^N\smallsetminus\Gamma_{v_n})}+||v||_{L^p(\Gamma_{v_n})}\}$
are bounded.
\item  we have
$L^{2^*}\subset L^p+L^q$ when $2<p<2^*<q$. Then, by Sobolev inequality,
we get the continuous embedding
\begin{equation*}
\D(\R^N)\subset L^p+L^q.
\end{equation*}

\end{enumerate}
\end{rem}

\begin{rem}\label{stimef}
If $f$ satisfies the hypothesis that we have made in the previous section, we have that
\begin{enumerate}
\item $f'$ is a bounded map from $L^p+L^q$ into $L^{p/p-1}\cap L^{q/q-1}$;
\item $f''$ is a bounded map from $L^p+L^q$ into $L^{p/p-2}\cap L^{q/q-2}$;
\item $f''$ is a continuous  map from $L^p+L^q$ into $L^{p/p-2}\cap L^{q/q-2}$.
\end{enumerate}
\end{rem}

At last we recall some result on Nehari manifolds.
For the proofs we refer to \cite{BGM04,BM04}.

\begin{rem}The functional $E_V$ is of class $C^2$ and it holds
\begin{equation}
\langle \nabla E_V(u),v\rangle=\int\nabla u\nabla v+Vuv-f'(u)vdx.
\end{equation}
Moreover the Nehari manifold defined as
\begin{equation}
\Ne_V=\left\{u\in\D\smallsetminus 0\  :\ \int|\nabla u|^2+Vu^2-f'(u)udx=0\right\}
\end{equation}
is of class $C^1$ and its tangent space at the point $u$ is
\begin{equation}
T_u{\Ne_V}=\left\{u\in\D:\int2\nabla u\nabla v+2Vuv-f'(u)vdx-f''(u)uv=0\right\}.
\end{equation}
\end{rem}
\begin{rem}\label{normanehari}
We have
\begin{equation}
\inf\limits_{u\in\Ne_V}||u||_{\D}>0.
\end{equation}
\end{rem}
\begin{proof}
At first notice that, by \ref{condV}
\begin{equation}
\exists c>0\text{ s.t }\int|\nabla u|^2+Vu^2\geq c||u||^2_{\D}, \ \forall u\in\D.
\end{equation}
Now, let $\{u_n\}$ a minimizing sequence in $\Ne_V$. By contradiction, we suppose that
$u_n$ converges to 0. We set $t_n=||u_n||_{\D}$, hence we can write $u_n=t_nv_n$
where $||v_n||_{\D}=1$. By claim 3 of Remark \ref{lp+lq}, the sequence is bounded in
$L^p+L^q$. Since $u_n\in\Ne_V$ and $t_n$ converges to 0, we have
\begin{eqnarray*}
&&ct_n=\frac c{t_n}||u_n||_{\D}^2\leq
\frac 1{t_n}\int\limits_{\R^N}|\nabla u_n|^2+Vu_n^2=
\int\limits_{\R^N} f'(t_nv_n)v_n\leq\\
&&\leq c_1t_n^{q-1}\int\limits_{\R^N\smallsetminus \Gamma_{t_nv_n}}|v_n|^q+
c_1t_n^{p-1}\int\limits_{\Gamma_{t_nv_n}}|v_n|^p\leq\\
&&\leq c_1t_n^{q-1}\int\limits_{\R^N\smallsetminus \Gamma_{t_nv_n}}|v_n|^q+
c_1t_n^{p-1}\int\limits_{\Gamma_{v_n}}|v_n|^p\leq\\
&&\leq c_1t_n^{q-1}\int\limits_{\R^N\smallsetminus \Gamma_{v_n}}|v_n|^q+
c_1t_n^{q-1}\int\limits_{(\R^N\smallsetminus \Gamma_{t_nv_n})\cap\Gamma_{v_n}}
\frac{|v_n|^p}{t_n^{q-p}}
+c_1t_n^{p-1}\int\limits_{\Gamma_{v_n}}|v_n|^p\leq\\
&&\leq c_1t_n^{q-1}\int\limits_{\R^N\smallsetminus \Gamma_{v_n}}|v_n|^q+
2c_1t_n^{p-1}\int\limits_{\Gamma_{v_n}}|v_n|^p.
\end{eqnarray*}
Hence we get
\begin{displaymath}
c\leq c_1t_n^{q-2}\int\limits_{\R^N\smallsetminus \Gamma_{v_n}}|v_n|^q+
2c_1t_n^{p-2}\int\limits_{\Gamma_{v_n}}|v_n|^p
\end{displaymath}
and by claim 2 of Remark \ref{lp+lq} we get the contradiction.
\end{proof}
\begin{rem}\label{lemma4.1}
We have that for any given $u\in{\D}\smallsetminus \{0\}$, there exists a unique real number
$t^V_u > 0$ such that $t^V_uu\in\Ne_V$ and $E_V(t^V_uu)$ is the maximum for the
function
\begin{displaymath}
t\rightarrow E_V(tu),\ \  t > 0.
\end{displaymath}
\end{rem}
\begin{proof}
Given $u\neq0$ we set, for $t\geq0$:
\begin{equation}
g_u(t)=g^V_u(t):=E_V(tu)=\int\limits_{\R^N}\frac{t^2}{2}(|\nabla u|^2+Vu^2)-f(tu)dx.
\end{equation}
We have:
\begin{eqnarray}
g'_u(t)&=&\int\limits_{\R^N}t|\nabla u|^2+Vtu^2-u f'(tu)dx;\\
g''_u(t)&=&\int\limits_{\R^N}|\nabla u|^2+Vu^2-u^2 f''(tu)dx.
\end{eqnarray}
By hypothesis on $f$, if $g'_u(\bar t)=0$ we have
\begin{equation}
\bar t^2 g''_u(\bar t)=\int\limits_{\R^N}\bar tu f'(\bar tu)-\bar t^2u^2 f''(\bar tu)dx<0,
\end{equation}
then $\bar t$ is a maximum point for $g_u$. Furthermore $0 = g_u(0)=g'_u(0)$ and
$g_u''(0) > 0$ by the hypothesis on $V$, then 0 is a local minimum point for $g_u$.
By (\ref{f1}), for $t\geq1$, we
have
\begin{eqnarray}
\nonumber
g_u(t)&\leq&\int\limits_{\R^N}\frac{t^2}{2}(|\nabla u|^2+Vu^2)dx-
c_0\!\!\!\!\int\limits_{\{|tu\leq1|\}}|tu|^qdx-
c_0\!\!\!\!\int\limits_{\{|tu>1|\}}|tu|^pdx\leq\\
&\leq&\int\limits_{\R^N}\frac{t^2}{2}(|\nabla u|^2+Vu^2)dx-
c_0\!\!\!\!\int\limits_{\{|tu>1|\}}|tu|^pdx\leq\\
\nonumber
&\leq&\frac{t^2}{2}\int\limits_{\R^N}(|\nabla u|^2+Vu^2)dx-
c_0t^p\!\!\!\!\int\limits_{\{|u>1|\}}|u|^pdx;
\end{eqnarray}
the last quantity diverges negatively as $t\rightarrow\infty$, since $p > 2$, and the claim
follows.
\end{proof}
We search antisymmetric solutions of (\ref{PVtau}). To do that, we
look for  critical points of the restriction of $E_V$ to $\Ne_V^\tau$.
In fact, if $\bar u\in\Ne_V^\tau$ is a critical point of the
restriction of $E_V$ to $\Ne_V^\tau$, then
\begin{eqnarray*}
E'_V(\bar u)\varphi=\langle\nabla E_V(\bar u),\varphi\rangle=0&&
\forall\varphi\in T_{\bar u}\Ne_V\cap\D_\tau(\R^N).
\end{eqnarray*}
But
$\nabla E_V(\bar u)=\tau\nabla E_V(\tau\bar u)=\tau\nabla E_V(\bar u)$,
so we can see that
$\nabla E_V(\bar u)=0$.

\section{A splitting lemma}\label{sezsplitting}

We recall that a sequence $\{u_n\}_n\in\D$ such that
$E_V(u_n)\rightarrow c$,
and there exists a sequence $\eps_n\rightarrow0$ s.t.
$|E_V'(u_n)\varphi|\leq \eps_n||\varphi||$, for all $\varphi\in\D$
is a Palais-Smale sequence at level $c$ for $E_V$.

In the same way we say that $\{u_n\}_n\in\Ne_V^\tau$ such that
$E_V(u_n)\rightarrow c$,
and there exists a sequence $\eps_n\rightarrow0$ s.t.
$|E_V'(u_n)\varphi|\leq \eps_n||\varphi||$, for all
$\varphi\in T_{u_n}\Ne_V \cap\D_\tau$
is a Palais-Smale sequence at level $c$ for $E_V$ restricted to $\Ne_V^\tau$.

A functional $f$ satisfies the $(PS)_c$ condition if all the
Palais-Smale sequences at level $c$ converge.

Unfortunately the functional $E_V$ on $\Ne_V^\tau$ does not satisfy the {\em PS} condition in
all the energy range. The  following lemma provides a description of the {\em PS} sequences
in $\D_\tau$.

This splitting lemma is a
fundamental tool to obtain the claimed results.
The main idea of this result spread over a result of M. Struwe \cite{St84} that described
all the {\em PS} sequences for $E_V$ on $H^1(\Omega)$ when $f(u)=u|u|^{2^*-2}$ and
$V(x)=\lambda u$
\begin{lemma}\label{splitting}
Let $\{u_n\}_n$ a PS sequence at level $c$
for the functional $E_V$ restricted to the manifold $\Ne^\tau_V$.
Then, up to a subsequence, there exist two integers $k_1,k_2\geq0$,
$k_1+k_2$ sequences $\{y_n^j\}_n$,
an antisymmetric solution $u^0$ of the problem $-\Delta u +Vu=f'(u)$,
and $k_1$ solutions $u^j$, $j=1,\dots k_1$, and $k_2$ antisymmetric solutions
$u^j$, $j=k_1+1,\dots, k_1+k_2$, of the problem
$-\Delta u =f'(u)$ such that
\begin{enumerate}
\item if $j=1,\dots,k_1$, then $\tau y_n^j\neq y_n^j$, and $|y_n^j|\rightarrow\infty$ as $n\rightarrow\infty$
\item if $j=k_1+1,\dots,k_2$, then $\tau y_n^j=y_n^j$, and $|y_n^j|\rightarrow\infty$ as $n\rightarrow\infty$
\item {$u_n(x)=u^0(x)+\sum\limits_{j=1}^{k_1}[u^j(x-y_n^j)+\tau u^j(x-y_n^j)]+
\sum\limits_{j=k_1+1}^{k_2}u^j(x-y_n^j)+o(1)$}
\item $E_V(u_n)\rightarrow E_V(u^0)+2\sum\limits_{j=1}^{k_1} E_0(u^j)+\sum\limits_{j=k_1+1}^{k_2}E_0(u^j)$
\end{enumerate}
\end{lemma}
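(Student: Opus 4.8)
The plan is to follow the by-now classical concentration-compactness / profile-decomposition scheme in the spirit of Struwe, adapted to the antisymmetric constraint. First I would show that a $(PS)_c$ sequence $\{u_n\}$ for $E_V$ restricted to $\Ne_V^\tau$ is bounded in $\D$: combining $E_V(u_n)\to c$ with the approximate criticality $\langle\nabla E_V(u_n),u_n\rangle = o(\|u_n\|)$ (note $u_n\in T_{u_n}\Ne_V\cap\D_\tau$ up to the small defect coming from the constraint, which is handled as in Remark \ref{normanehari}) and hypothesis $(f_1)$, i.e. $\mu f(s)\le f'(s)s$ with $\mu>2$, gives the usual coercivity estimate $c + o(1)\|u_n\| \ge (\tfrac12-\tfrac1\mu)\int |\nabla u_n|^2 + Vu_n^2 \ge c'(\tfrac12-\tfrac1\mu)\|u_n\|_\D^2$ using \eqref{condV}. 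Then, up to a subsequence, $u_n\rightharpoonup u^0$ weakly in $\D$; since the action of $\tau$ is a linear isometry, $u^0\in\D_\tau$, and a standard argument (testing against $\varphi\in C_0^\infty$, using Remark \ref{stimef} for the continuity of $f'$ from $L^p+L^q$ into its dual, and the fact that $V\in L^{N/2}$ with $\|V\|_{L^{N/2}}<S$ makes the linear term weakly continuous) shows $u^0$ solves $-\Delta u + Vu = f'(u)$ and is $\tau$-antisymmetric. This gives the $u^0$ term.

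Next I would set $v_n^1 := u_n - u^0$, which is a $(PS)$-type sequence for the limit functional $E_0$ (here the potential term disappears in the limit because $V\in L^{N/2}$ concentrates no mass at infinity: the Brezis--Lieb-type splitting of the energy and of $\nabla E$ shows the $V$-contribution of the ``tails'' vanishes), with $v_n^1\rightharpoonup 0$. If $v_n^1\to 0$ strongly we stop with $k_1=k_2=0$. Otherwise, by a concentration argument (Lions' vanishing lemma in the $L^p+L^q$ setting, using Remark \ref{lp+lq} to convert non-vanishing of the $L^p+L^q$ norm into a uniform lower bound for an integral of $|v_n^1|$ over some ball), there are points $y_n^1$ with $|y_n^1|\to\infty$ (they escape to infinity because $v_n^1\rightharpoonup 0$) such that $v_n^1(\cdot + y_n^1)\rightharpoonup u^1\ne 0$, a nontrivial solution of $-\Delta u = f'(u)$. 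Here the dichotomy on the symmetry of the concentration point enters: either $\tau y_n^1 - y_n^1$ stays bounded, in which case after a translation one may take $\tau y_n^1 = y_n^1$ and the limiting profile inherits antisymmetry (this produces a ``type 2'' bubble, contributing $E_0(u^1)$), or $|\tau y_n^1 - y_n^1|\to\infty$, in which case the antisymmetry of $u_n$ forces a twin bubble $\tau u^1$ centred at $\tau y_n^1$ to split off simultaneously (a ``type 1'' bubble, contributing $2E_0(u^1)$ by the isometry of $\tau$). I would then subtract $u^1(\cdot - y_n^1) + \tau u^1(\cdot - y_n^1)$ (resp. $u^1(\cdot-y_n^1)$) from $v_n^1$ to get $v_n^2$, verify the iterated Brezis--Lieb identities for both the energy and the norm, and repeat.

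The iteration terminates after finitely many steps: by Remark \ref{normanehari} every nontrivial solution of $-\Delta u = f'(u)$ has energy bounded below by a fixed positive constant $\mu_0>0$ (and each extracted profile lies, after the natural rescaling, on the corresponding Nehari manifold), while the energies add up with nonnegative remainder to the finite number $c$; hence $k_1+k_2 \le c/\mu_0$. Collecting the profiles and the vanishing remainder yields statements (3) and (4). The main obstacle I expect is the bookkeeping of the potential term: one must show carefully that after each translation $v_n^j(\cdot+y_n^j)$ the contribution $\int V (v_n^j)^2$ splits as $\int V (u^0\text{-part})^2 + o(1)$ with no cross terms and no surviving mass at the escaping centres — this is where $V\in L^{N/2}(\R^N)$ (rather than merely bounded) is essential, via Hölder in the critical Sobolev exponent together with the fact that $|y_n^j|\to\infty$, so that $\|V\|_{L^{N/2}(B_R(y_n^j))}\to 0$ for every fixed $R$. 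A secondary technical point is that the PS condition is imposed only on the tangent space $T_{u_n}\Ne_V\cap\D_\tau$, so one must first upgrade it, by the usual Lagrange-multiplier argument on the $C^1$ manifold $\Ne_V^\tau$, to an unconstrained PS condition for $E_V$ on $\D_\tau$ (the multiplier tends to zero because $\langle\nabla G_V(u_n),u_n\rangle$ is bounded away from zero, again by Remark \ref{normanehari} and $(f_1)$), after which the decomposition proceeds as above.
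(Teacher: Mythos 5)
Your plan is essentially the paper's proof: upgrade the constrained PS condition to a free one, extract the antisymmetric weak limit $u^0$, show the residue is a PS sequence for $E_0$, and then run the Struwe-type extraction with exactly the dichotomy the paper uses — concentration points $\xi_n$ with $|\xi_n-\tau\xi_n|$ bounded (recenter on the fixed-point set of $\tau$ to get an antisymmetric profile) versus unbounded (a twin bubble $\tau u^j$ splits off, doubling the energy contribution) — closing with the standard energy-quantization termination. The only difference is presentational: the paper outsources the basic extraction and Brezis--Lieb steps to \cite{BGM04} and writes out the cutoff construction for the twin-bubble case in detail, while you sketch those from scratch; no gap.
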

\begin{proof}
Since $u_n$ is a {\em PS} sequence for the functional $E_V$ restricted to the manifold $\Ne_V$, then $u_n$ is a
{\em PS} sequence for the functional $E_V$. For Step 1 of \cite[Lemma 3.3]{BGM04} we get that
$u_n$ converges to $u^0$ weakly in $\D$ (up to subsequence) and $u^0$ solves $-\Delta u +Vu=f'(u)$.

Since $\tau u_n=u_n$, we have $\tau u^0=u^0$. In fact, if $u_n\stackrel{\D}{\rightharpoonup}u^0$, then, for
every $R>0$, we have that $u_n\stackrel{L^2(B_R)}{\longrightarrow}u^0$, so $u_n(x)\rightarrow u^0(x)$
for almost all $x$.

We set
\begin{equation*}
\psi_n(x)=u_n(x)-u^0(x).
\end{equation*}
Then $\tau\psi_n=\psi_n$, and
$\psi_n\rightharpoonup0$ weakly in $\D$. By Steps 2 and 4 of \cite[Lemma 3.3]{BGM04}, we get that $\psi_n$ is
a {\em PS} sequence for $E_0$. If $\psi_n\nrightarrow0$ strongly in $\D$, then by Steps 3 and 4 of
\cite[Lemma 3.3]{BGM04} we get that
there exists a sequence $\{\xi_n\}\subset\R^N$ with $|\xi_n|\rightarrow \infty$ as $n\rightarrow \infty$,
and $\psi_n(x+\xi_n)\rightharpoonup u^1(x)$ where $u^1$ is a weak solution of $-\Delta u =f'(u)$.

We consider in $\R^N=\Gamma \oplus \Gamma^\bot$, where $\Gamma:=\{x \in \R^N\ :\ \tau(x)=x \}$. We consider
$P_\Gamma$ the projection on the subspace $\Gamma$. At this point we must distinguish two cases
\begin{description}
\item[Case I:] if $|\xi_n-\tau(\xi_n)|$ is bounded we define $y_n=P_\Gamma \xi_n$.
\item[Case II:] if $|\xi_n-\tau(\xi_n)|$ is unbounded we define $y_n=\xi_n$.
\end{description}

\noindent{\bf Case I}
In this case there exist a solution $\widetilde u\in{\D_\tau}\smallsetminus \{0\}$ of
$-\Delta u=f'(u)$ and a {\em PS} sequence $\{\widetilde\psi_n\}_n$ for $E_0$ such that
\begin{equation}
\psi_n=\widetilde{\psi}_n+\widetilde u(x+y_n),
\end{equation}
$\widetilde\psi_n\rightharpoonup0$ weakly in $\D$, and
\begin{equation}
E_0(\widetilde\psi_n)=E_0(\psi_n)-E_0(\widetilde u)+o(1).
\end{equation}

We can assume, without loss of generality, that $\xi_n=P_\Gamma\xi_n+w$,
where $w\in\Gamma^\bot$. We now
consider the sequence $\{\psi_n(x+y_n)\}_n$ which is bounded: hence,
up to subsequence $\{\psi_n(x+y_n)\}_n$
converges to $\widetilde u(x)$ weakly in $\D$, $\tilde u(x)=u^1(x-w)\neq0$, then
$-\Delta \widetilde u=f'(\widetilde u)$. Furthermore, because
$\tau y_n=y_n$ we have that $\tau\widetilde u=\widetilde u$. We define
\begin{equation}
\widetilde \psi_n(x):=\psi_n(x)-\widetilde u(x-y_n).
\end{equation}
We will verify that $\widetilde \psi_n$ is a {\em PS} sequence for $E_0$.
Indeed by Lemma 2.11 of \cite{BM04}
we get
\begin{eqnarray*}
E_0(\widetilde \psi_n)&=&E_0(\psi_n(x)-\widetilde u(x-y_n))=
E_0(\psi_n(x+y_n)-\widetilde u(x))=\\
&=&E_0(\psi_n)-E_0(\widetilde u)+o(1),
\end{eqnarray*}
and, because $\{\psi_n\}_n$ is a {\em PS} sequence for $E_0$, we have that $E_0(\psi_n)$
converges, so $E_0(\widetilde \psi_n)$
converges, also.

Again, since $\{\psi_n\}$ is a {\em PS} sequence, we have that exists an
$\varepsilon_n\rightarrow  0$ such that,
for all $\varphi\in C^\infty_0$
\begin{eqnarray*}
&&\Big|(E_0)'(\widetilde\psi_n)[\varphi]\Big|=
\left|\int\nabla\psi_n\nabla\varphi-\int\nabla\widetilde u_{-y_n}\nabla\varphi-
\int f'(\psi_n-\widetilde u_{-y_n}\varphi)\right|=\\
&&=\!\left|\!\int\!\!\left[f'(\psi_n(x))-f'(\widetilde u(x{-y_n}))-
f'(\psi_n(x)-\widetilde u(x{-y_n}))\right]\!\varphi(x) dx\!+
\!\varepsilon_n||\varphi||\right|\!\!=\\
&&=\!\left|\!\int\!\!\left[f'(\psi_n(z\!+\!y_n))-f'(\widetilde u(z))-
f'(\psi_n(z\!+\!y_n)-\widetilde u(z))\right]\!\varphi_{y_n}\!(z)dz\!+
\!\varepsilon_n||\varphi||\right|.
\end{eqnarray*}
Now we can choose an $R>0$ and split this integral as follows.
\begin{eqnarray*}
\Big|(E_0)'(\widetilde\psi_n)[\varphi]\Big|&=&
\Big|\int_{B_R}[f'(\psi_n(x+y_n))-f'(\widetilde u)]\varphi_{y_n}(z)dz+\\
&&+\int_{\R^N\smallsetminus B_R}[f'(\psi_n(x+y_n))-f'(\psi_n(x+y_n)-\widetilde u)]\varphi_{y_n}dz-\\
&&-\!\!\!\int_{\R^N\smallsetminus B_R}\!\!\!f'(\widetilde u)\varphi_{y_n}\!
-\!\!\!\int_{B_R}\!\!f'(\psi_n(x+y_n)\!-\!\widetilde u)\varphi_{y_n}dz\!+
\!\varepsilon_n||\varphi||\Big|\!\!\leq\\
&\leq&A_n\ \gamma_{n,R}\ ||\varphi||_{\D}+B_n\ M_R\ ||\varphi||_{\D}+\varepsilon_n||\varphi||_{\D}
\end{eqnarray*}
where
\begin{eqnarray*}
&&A_n=||f''(\theta\psi_n(\cdot+y_n)+(1-\theta)\widetilde u)-
f''(\theta\psi_n(\cdot+y_n)-\theta\widetilde u)||_{L^{p/p-2}};\\
&&\gamma_{n,R}=||\psi_n(\cdot-y_n)-\widetilde u||_{L^p(B_R)};\\
&&B_n=||f''(\psi_n(\cdot+y_n)+\theta\widetilde u)-
f''(\theta\widetilde u)||_{L^{p/p-2}\cap L^{q/q-2}};\\
&&M_R=||\widetilde u||_{L^p+L^q(\R^N\smallsetminus B_R)},
\end{eqnarray*}
for some $0<\theta<1$.

By \cite[Lemma 2.11]{BM04} we have that both $A_n$ and $B_n$ are bounded.
Since $M_R\rightarrow 0$
as $R\rightarrow+\infty$ and, given $R$, $\gamma_{n,R}\rightarrow0$ as
$n\rightarrow+\infty$, we get the claim.

\noindent{\bf Case II} In this case there exist a solution $u^1\neq 0$ of
$-\Delta u=f'(u)$ and a {\em PS} sequence
$\{\widetilde\psi_n\}_n\subset \D_\tau$ for $E_0$ such that
\begin{eqnarray}
&&\psi_n(x)=\widetilde\psi_n(x)+u^1(x-y_n)-u^1(\tau x -y_n)+o(1);\\
&&E_0(\widetilde\psi_n)=E_0(\psi_n)-2E_0(u^1)+o(1)\label{caso2cond2}.
\end{eqnarray}
We define $\widetilde\psi_n=\psi_n-\gamma_n$,
\begin{equation}
\gamma_n(x)=u^1(x-y_n)\chi\left(\frac{|x-y_n|}{\rho_n}\right)-
u^1(\tau x-y_n)\chi\left(\frac{|x-\tau y_n|}{\rho_n}\right),
\end{equation}
where $\rho_n:=\frac{|y_n-\tau y_n|}{2}\rightarrow\infty$ for
$n\rightarrow\infty$, and, as usual,
$\chi:\R^+_0\rightarrow[0,1]$ is a $C^\infty$ function such that
$\chi(s)\equiv0$ for all $s\geq2$, $\chi(s)\equiv1$ for all
$s\leq1$ and $|\chi'(s)|\leq2$ for all $s$.

It is trivial that $\tau \gamma_n=\gamma_n$, so
$\tau\widetilde\psi_n=\widetilde\psi_n$. Furthermore, easily we have
\begin{equation*}
\psi_n(x)=\widetilde\psi_n(x)+u^1(x-y_n)-u^1(\tau x -y_n)+o(1).
\end{equation*}
Now we have to prove (\ref{caso2cond2}), and to show that
$\widetilde \psi_n$ is a {\em PS} sequence.

At first we prove that
\begin{equation}\label{normagamma}
||\widetilde\psi_n||^2_{\D}=||\psi_n-\gamma_n||^2=||\psi_n||^2+2||u^1||^2+o(1).
\end{equation}
In fact we have that $||\psi_n-\gamma_n||^2=||\psi_n||^2+||\gamma_n^2||^2-2(\psi_n,\gamma_n)_{\D}$,
and it is easy to see
that $||\gamma_n^2||^2\rightarrow 2||u^1||^2$. Furthermore
\begin{eqnarray}\label{prodscal}
(\psi_n,\gamma_n)_{\D}&=&\int \nabla\psi_n
\nabla\left(u^1(x-y_n)\chi\left(\frac{|x-y_n|}{\rho_n}\right)\right)+\\
\nonumber &&+
\int \nabla\psi_n
\nabla\left(u^1(\tau x-y_n)\chi\left(\frac{|\tau x-y_n|}{\rho_n}\right)\right),
\end{eqnarray}
and the first term converges to $\int|\nabla u^1|$. For the second term we have
\begin{eqnarray*}
&&\int \nabla\psi_n
\nabla\left(u^1(\tau x-y_n)\chi\left(\frac{|\tau x-y_n|}{\rho_n}\right)\right)
=\\
&&=\int \left(\nabla\psi_n
\nabla u^1(\tau x-y_n)\right)\chi(\cdot)
+\int \left(\nabla\psi_n
\nabla \chi(\cdot)\right)u^1(\tau x-y_n).
\end{eqnarray*}
The last term of the equation vanishes when $n\rightarrow\infty$, while,
remembering that $\psi_n$ is symmetric, and setting $z=\tau x-y_n$, we have
\begin{eqnarray*}
&&\int \left(\nabla\psi_n(x)
\nabla u^1(\tau x-y_n)\right)\chi(\cdot)dx=\\
&&\int \left(\tau\nabla\psi_n(z+y_n)
\nabla u^1(z)\right)\chi\left(\frac{z}{\rho_n}\right)dz\rightarrow-\int|\nabla u^1|^2,
\end{eqnarray*}
so we have proved (\ref{normagamma}).

We want now to estimate
$$\int f(\widetilde\psi_n)=\int f(\psi_n-\gamma_n).$$
Set
\begin{eqnarray*}
I_1&=&\int_{|x-y_n|<2\rho_n}f\left(\psi_n(x)-u^1(x-y_n)\chi\left(\frac{|x-y_n|}{\rho_n}\right)\right);\\
I_2&=&\int_{|\tau x-y_n|<2\rho_n}f\left(\psi_n(x)+u^1(\tau x-y_n)\chi\left(\frac{|\tau x-y_n|}{\rho_n}\right)\right);\\
I_3&=&\int_{\{B(y_n)_{2\rho_n}\cup B(\tau y_n)_{2\rho_n}\}^C}f(\psi_n(x)),
\end{eqnarray*}
we have
\begin{equation}
\int f(\widetilde\psi_n)=I_1+I_2+I_3.
\end{equation}
We have that
\begin{equation}
\left[\psi_n(z+y_n)-u^1(z)\chi\left(\frac{|z|}{\rho_n}\right)\right]
\chi\left(\frac{|z|}{\rho_n}\right)\rightharpoonup0\text{ in }\D.
\end{equation}

By \cite[Lemma 2.11]{BM04}, then we have that
\begin{eqnarray*}
I_1&=&\int_{|z|<2\rho_n}f\left(\psi_n(z+y_n)-u^1(z)\chi\left(\frac{|z|}{\rho_n}\right)\right)=\\
&=&\int_{|z|<2\rho_n}f(\psi_n(z+y_n))-
\int_{|z|<2\rho_n}f\left(u^1(z)\chi\left(\frac{|z|}{\rho_n}\right)\right)+o(1)=\\
&=&\int_{|z|<2\rho_n}f(\psi_n(z+y_n))-
\int_{\R^n}f\left(u^1(z)\right)+o(1).
\end{eqnarray*}
In the same way, because $\psi_n$ is symmetric,
\begin{equation}
\left[\psi_n(\tau z+\tau y_n)-u^1(z)\chi\left(\frac{|z|}{\rho_n}\right)\right]
\chi\left(\frac{|z|}{\rho_n}\right)\rightharpoonup0\text{ in }\D,
\end{equation}
and
\begin{eqnarray*}
I_2=\int_{|\tau x-y_n|<2\rho_n}f(\psi_n(x))-
\int_{\R^n}f\left(u^1(x)\right)+o(1).
\end{eqnarray*}
At last we have
\begin{eqnarray}\label{fpsi}
\nonumber
\int f(\widetilde\psi_n)&=&
\int\limits_{|x-y_n|<2\rho_n}f(\psi_n(x))+\int\limits_{|\tau x-y_n|<2\rho_n}f(\psi_n(x))+\\
&&
\!\!+\!\!\int\limits_{\{B(y_n)_{2\rho_n}\cup B(\tau y_n)_{2\rho_n}\}^C}
\!\!\!\!\!\!\!\!\!\!\!\!f(\psi_n(x))-
2\!\!\int\limits_{\R^n}f(u^1(x))\!+\!
o(1)=\\
\nonumber&=&\int_{\R^N} f(\psi_n(x))-2\int_{\R^n}f(u^1(x))+o(1).
\end{eqnarray}
From (\ref{normagamma}) and (\ref{fpsi}) we obtain, as claimed
\begin{equation}
E_0(\widetilde\psi_n)=E_0(\psi_n-\gamma_n)=E_0(\psi_n)-2E_0(u^1)+o(1);
\end{equation}
furthermore, because $\{\psi_n\}_n$ is a {\em PS} sequence for $E_0$, we have that
$E_0(\widetilde\psi_n)\rightarrow c$ for some $c\in\R$.

To complete the proof we must show that
\begin{equation}
|(E_0)'(\widetilde\psi_n)\varphi|\leq\varepsilon_n||\varphi||_{\D},
\end{equation}
where $\varepsilon_n\rightarrow0$.
Set
\begin{eqnarray*}
I^1_n&=&\!\!\!\int\limits_{|x-y_n|<2\rho}\!\!\!\!\left[
f'(\psi_n(x))-f'\left(\psi_n(x)-u^1(x-y_n)\chi\!\left(\frac{|x-y_n|}{\rho_n}\!\right)\!\right)\!
\right]\varphi(x)dx-\\
&&-\int\nabla\left(u^1(x-y_n)\chi\left(\frac{|x-y_n|}{\rho_n}\right)\right)\nabla\varphi(x);\\
I^2_n&=&\!\!\!\!\!\int\limits_{|\tau x-y_n|<2\rho}\!\!\!\!\!\left[
f'(\psi_n(x))\!-\!f'\!\left(\psi_n(x)-u^1(\tau x-y_n)\chi\!\left(\frac{|\tau x-y_n|}{\rho_n}\!\right)\!\right)\!
\right]\!\!\varphi(x)dx\!-\\
&&-\int\nabla\left(u^1(\tau x-y_n)\chi\left(\frac{|\tau x-y_n|}{\rho_n}\right)\right)\nabla\varphi(x);\\
I^3_n&=&\int\limits_{\{B(y_n)_{2\rho_n}\cup B(\tau y_n)_{2\rho_n}\}^C}
[\nabla\psi_n-f'(\psi_n)]\varphi,
\end{eqnarray*}
we have that
\begin{equation}
|(E_0)'(\widetilde\psi_n)\varphi|=I^1_n+I^2_n+I^3_n.
\end{equation}
Immediately we have that $I^3_n\leq\varepsilon_n||\varphi||$; furthermore,
we can estimate $I^1_n$ as before,
obtaining
\begin{eqnarray*}
I^1_n&=&\!\!\int\limits_{|z<2\rho_n|}\!\!\!\left[
f'(\psi_n(z+y_n))\!-\!f'\left(\psi_n(z+y_n)-u^1(z)\chi\!\left(\!\frac{|z|}{\rho_n}\!\right)\!\right)\!
\right]\!\!\varphi(z+y_n)dz-\\
&&-\int\limits_{|z<2\rho_n|}f'(u^1\chi)\varphi(z+y_n)\ dz+\varepsilon_n||\varphi||.
\end{eqnarray*}
Setting
\begin{equation}
\alpha_n(z):=\psi_n(z+y_n)-u^1(z)\ \chi\left(\frac{|z|}{\rho_n}\right),
\end{equation}
we have
\begin{eqnarray*}
|I^1_n|&=&\Big|\int\limits_{B_{\rho_n}}\!\!\!\left[
f'(u^1\chi+\alpha_n)-f'(\alpha_n)-f'(u^1\chi)
\right]\varphi_{y_n}dz+\varepsilon_n||\varphi||
\ \Big|,
\end{eqnarray*}
and, chosen an $R>0$,
\begin{eqnarray*}
|I^1_n|&=&\Big|\int\limits_{B_{\rho_n}}\!\!\!\left[
f'(u^1\chi+\alpha_n)-f'(u^1\chi)\right]\varphi_{y_n}dz+\\
&&+\int\limits_{(\R^N\smallsetminus B_R)\cap B_{2\rho_n}}
\left[
f'(u^1\chi+\alpha_n)-f'(\alpha_n)\right]\varphi_{y_n}dz-\\
&&-\int\limits_{(\R^N\smallsetminus B_R)\cap B_{2\rho_n}}
f'(u^1\chi)\varphi_{y_n}dz-\int\limits_{B_{\rho_n}}f'(\alpha_n)\varphi_{y_n}dz+\varepsilon_n||\varphi||\ \Big|.
\end{eqnarray*}
Using that $f'(0)=0$ at last we have
\begin{eqnarray*}
&&|I^1_n|\leq||f''(u^1\chi+\theta_1\alpha_n)-f''(\theta_1\alpha_n)||_{L^{p/p-2}(\R^N)}
||\varphi||_{L^p(\R^N)}||\alpha_n||_{L^p(B_R)}+\\
&&+||\!f''\!(\alpha_n\!\!+\!\theta u^1\chi)\!\!-\!\!
f''\!(\theta_1u^1\chi)||\!_{L^{\frac{p}{p-2}}\cap L^{\frac{q}{q-2}}(\R^N\!)}
||\varphi||_{L^p+L^q}
||u^1\chi||_{L^p+L^q(B_R^C)}\!\!+\\
&&+\varepsilon_n||\varphi||,
\end{eqnarray*}
where $0<\theta,\theta_1<1$. By Remark \ref{stimef} we get
\begin{equation}
|I^1_n|\leq\varepsilon_n||\varphi||_{\D}.
\end{equation}
In the same way we can estimate $|I^2_n|$, and this concludes the proof.
\end{proof}

\begin{rem}
If $u_n\in\Ne_V^\tau$ is a Palais-Smale sequence of the restriction of
$E_V$ to $\Ne_V^\tau$, that is $E_V(u_n)$ converges and
\begin{displaymath}
|E'_V(u_n)w|\leq\varepsilon_n||w||_{\D}\ \ \forall w\in T_{u_n}\Ne_V\cap\D_\tau,
\end{displaymath}
where $\varepsilon_n\rightarrow0$, then $u_n$ is a Palais-Smale sequence for
the functional $E_V$.
\end{rem}

This remark, combined with the splitting lemma, provides a complete description of
the {\em PS} sequences in our case.

\section{The main result}\label{sezmain}

At this point we prove some technical lemmas.

Let $u\in\Ne_V$, then $u^+$ and $u^-$ belong to $\Ne_V$.
Furthermore, if $u$ is antisymmetric, we have $E_V(u^+)=E_V(u^-)$. So,
if $u\in\Ne_V^\tau$, we get
\begin{displaymath}
E_V(u)=E_V(u^+)+E_V(u^-)=2E_V(u^+)\geq 2\inf_{\Ne_V}E_V=2\mu_V.
\end{displaymath}
This implies that
\begin{eqnarray}
\mu_V^\tau&\geq&2\mu_V;\\
\mu_0^\tau&\geq&2\mu_0.
\end{eqnarray}

\begin{rem}
We have $\mu_0^\tau=2\mu_0$
\end{rem}
\begin{proof}
We have to proof that $\mu_0^\tau\leq 2\mu_0$. It is possible to find a sequence
$\{u_k\}_k\subset \Ne_0^\tau$ such that $E_0(u_k)\rightarrow 2\mu_0$. So
\begin{displaymath}
\mu_0^\tau\leq \inf\limits_k E_0(u_k)\leq 2 \mu_0.
\end{displaymath}
The construction of $\{u_k\}_k$ is quite similar to the construction of $\{z_k\}_k$
in the next theorem. So also the proof that $E_0(u_k)\rightarrow 2\mu_0$. Therefore,
for the sake of simplicity, we omit the detailed proof of this result.
\end{proof}

We are ready now to prove the main lemma of this section
\begin{lemma}\label{main}
We have that $\mu^\tau_V\leq\mu^\tau_0$
\end{lemma}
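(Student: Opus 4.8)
The plan is to compare the symmetric minimization problem for $E_V$ with that for $E_0$ by constructing, for any $u\in\Ne_0^\tau$ close to achieving $\mu_0^\tau$, a competitor in $\Ne_V^\tau$ whose energy is not larger. Since $V=V_y$ is supported in the two small balls $B_1(y)\cup B_1(\tau y)$, the idea is to take a fixed $u\in\Ne_0^\tau$ (or a near-minimizer), translate it far away from $\mathrm{supp}\,V$, and exploit that the $L^{N/2}$ mass of $V$ seen by the translated function tends to zero. Concretely, I would first recall from Remark \ref{lemma4.1} that for any $w\in\D_\tau\smallsetminus\{0\}$ there is a unique $t_w^V>0$ with $t_w^V w\in\Ne_V$, and since $\D_\tau$ is $\tau$-invariant and $V$ is $\tau$-symmetric, $t_w^V w\in\Ne_V^\tau$; moreover $E_V(t_w^V w)=\max_{t>0}E_V(tw)$. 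Thus $\mu_V^\tau\le E_V(t_w^V w)$ for every such $w$.

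Next I would estimate $E_V(t_w^V w)$ from above by $E_0$-quantities. The key point: if $w=w_n$ is a translate of a fixed near-minimizer $\bar u\in\Ne_0^\tau$ by a vector $R_n e$ with $e\in\Gamma^\bot$ and $R_n\to\infty$ (so that $w_n$ and its $\tau$-image drift apart and away from $\mathrm{supp}\,V$), then $\int V w_n^2\to 0$: indeed $\left|\int V w_n^2\right|\le \|V\|_{L^{N/2}}\|w_n\|_{L^{2^*}}^2$ restricted to the shrinking overlap of $\mathrm{supp}\,V$ with the effective support of $w_n$, and this localized norm vanishes as $R_n\to\infty$ since $\bar u\in L^{2^*}$ decays at infinity. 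Consequently $g_{w_n}^V(t)=g_{w_n}^0(t)+\tfrac{t^2}{2}\int V w_n^2\to g_{\bar u}^0(t)$ uniformly on compact $t$-intervals, the maximizers $t_{w_n}^V$ stay bounded and bounded away from $0$ (using Remark \ref{normanehari} and the coercivity from \eqref{condV}), and therefore $E_V(t_{w_n}^V w_n)\to \max_{t>0}E_0(t\bar u)=E_0(\bar u)$. Letting $\bar u$ range over a minimizing sequence for $\mu_0^\tau$ gives $\mu_V^\tau\le\mu_0^\tau$.

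The main obstacle I anticipate is making the convergence $t_{w_n}^V\to t_{\bar u}^0$ and the energy convergence rigorous and uniform: one must check that the family $\{w_n\}$ stays bounded in $\D$ and in $L^p+L^q$ (Remark \ref{lp+lq}), that the nonlinear term $\int f(t w_n)$ behaves continuously under these translations — this is exactly the type of translation-invariance statement quoted as Lemma 2.11 of \cite{BM04} — and that the perturbation $\tfrac{t^2}{2}\int V w_n^2$ is genuinely $o(1)$ uniformly for $t$ in the relevant compact range. A secondary subtlety is the choice of translation direction: one needs $w_n\in\D_\tau$, which is automatic if $\bar u\in\D_\tau$ and we translate in a $\tau$-fixed direction, but then one must ensure the translate still escapes $\mathrm{supp}\,V=B_1(y)\cup B_1(\tau y)$; choosing the translation along a direction in $\Gamma$ pointing away from both $y$ and $\tau y$ handles this. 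Once these points are in place, the inequality $\mu_V^\tau\le\mu_0^\tau$ follows by taking the infimum over the minimizing sequence.
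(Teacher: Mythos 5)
Your route is genuinely different from the paper's. The paper takes the ground state $w$ of $E_0$ on $\Ne_0$ (whose existence is known from \cite{BM04,BL83a}) and glues two cut-off, far-apart, $\tau$-related copies $w(x-y_k)\chi(\cdot)-w(\tau x-y_k)\chi(\cdot)$ to produce a test sequence in $\Ne_V^\tau$ whose energy tends to $2\mu_0=\mu_0^\tau$; you instead translate an arbitrary near-minimizer $\bar u\in\Ne_0^\tau$ of the \emph{symmetric} problem to infinity along a $\tau$-fixed direction. Where your construction applies it is cleaner: the translate is an exact isometry for $E_0$ and for the Nehari constraint (no cut-off errors, no appeal to the Brezis--Lieb-type Lemma 2.11 of \cite{BM04} for the nonlinear term), the only perturbation is $\frac{t^2}{2}\int Vw_n^2$, and it avoids using both the existence of a minimizer for $\mu_0$ and the identity $\mu_0^\tau=2\mu_0$. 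Two presentational points: drop the opening framing ``since $V=V_y$ is supported in two small balls'' --- the lemma is invoked in the proof of Theorem \ref{mainVpos} for a general positive $V\in L^{N/2}$, and your own tail estimate $||V||_{L^{N/2}(\R^N\smallsetminus B_R)}\rightarrow0$ combined with $||w_n||_{L^{2^*}(B_R)}\rightarrow0$ already covers that case; and the upper bound on $t^V_{w_n}$ should be made explicit as in the paper's Step III, by fixing $\bar t$ with $g^0_{\bar u}(\bar t)<0$, deducing $g^V_{w_n}(\bar t)<0$ for $n$ large, and using the uniqueness of the maximizer from Remark \ref{lemma4.1} to conclude $t^V_{w_n}<\bar t$.

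There is, however, one genuine gap: the translation direction may not exist. Writing $w_n=\bar u(\cdot-\xi_n)$, one computes $\tau w_n=\bar u(\cdot-\tau\xi_n)$, so $w_n\in\D_\tau$ forces $\xi_n\in\Gamma=\{x:\tau x=x\}$, exactly as you note. But the standing hypothesis is only that $\tau$ is a nontrivial orthogonal involution, which allows $\Gamma=\{0\}$ (e.g.\ $\tau=-\Id$, the case of odd functions); then no nonzero translation preserves $\D_\tau$ and your sequence cannot be built at all. This is precisely the situation the paper's gluing construction is designed to handle: the two bumps are centred at $y_k$ and $\tau y_k$ with $|y_k-\tau y_k|\rightarrow\infty$, which is available for \emph{every} nontrivial $\tau$. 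So either you must restrict your argument to involutions with $\Gamma\neq\{0\}$, or, in the remaining case, fall back on a gluing construction --- at which point the cut-offs and the estimates of Steps II--IV of the paper's proof become unavoidable.
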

\begin{proof}
We prove it by steps

\noindent{\bf Step I} We know that $w$ exists
such that $\mu_0=E_0(w)$. Let $\chi(x)$ a smooth, real function such that
\begin{displaymath}
\chi=\left\{
\begin{array}{ll}
1& B(0,1);\\
0& \R^N\smallsetminus B(0,3).
\end{array}
\right.
\end{displaymath}
We also ask that $\chi(x)=\chi(|x|)$ and that $|\nabla\chi|\leq1 $.

Let $\{y_k\}\subset\R^N$ s.t. $|y_k|\rightarrow\infty$ and
$|\tau(y_k)-y_k|\rightarrow\infty$. Let $\rho_k$ be defined as
\begin{displaymath}
\rho_k:=\frac{|\tau(y_k)-y_k|}{6}.
\end{displaymath}

At last we define a function in $\D$
\begin{equation}
z_k=z_k^1+z_k^2,
\end{equation}
where
\begin{eqnarray}
z_k^1&=&w(x-y_k)\chi\left(\frac{x-y_k}{\rho_k}\right);\\
z_k^2&=&-w(\tau(x)-y_k)\chi\left(\frac{x-\tau(y_k)}{\rho_k}\right).
\end{eqnarray}
Obviously we have that $\tau z_k^1=z_k^2$ and $\tau z_k^2=z_k^1$, so
$z_k\in\D_\tau \ \forall k$. Furthermore
$z_k^1$ and $z_k^2$ have disjoint supports, so
\begin{equation}
E_V(t\cdot z_k)=E_V(t\cdot z_k^1)+E_V(t\cdot z_k^2)\ \forall t>0.
\end{equation}
We know, from Remark \ref{lemma4.1}, that it exists a $t_k>0$ s.t.
$t_k\cdot z_k^1\in\Ne_V$. It's easy to see that,
for such $t_k$ we have that $t_k\cdot z_k^2\in\Ne_V$ and
$t_k\cdot z_k\in\Ne_V^\tau$.

In the next we will prove that
$E_V(t_kz_k)\rightarrow2\mu_0$, when $k\rightarrow\infty$.

\noindent{\bf Step II} We prove that $||z_k^1(x)-w(x-y_k)||_{\D}\rightarrow0$ for
$k\rightarrow\infty$.

Set $w_k:=w(x-y_k)$, and
$\gamma_k:=\left(1-\chi\left(\frac{x-y_k}{\rho_k}\right)\right)$,
we have
\begin{eqnarray*}
||z_k^1(x)-w_k||_{\D}^2&=&\int\left|\nabla\left[ \gamma_k w_k\right]\right|^2
\leq2\int\gamma_k^2|\nabla w_k|^2
+2\int|\nabla\gamma_k|^2w_k^2\leq\\
&\leq&\int\limits_{|x-y_k|>3\rho_k}|\nabla w_k|^2+
\frac{2}{\rho_k^2}\int\limits_{\rho_k<|x-y_k|<3\rho_k}w_k^2\rightarrow0,
\end{eqnarray*}
as $k\rightarrow\infty$.

\noindent{\bf Step III} We prove that it exists $c,C>0$ such that
$c<t_k<C$ for all $k$.

By Remark \ref{normanehari}, we know that,
if $t_kz_k^1\in\Ne_V$, then it exists $M>0$ such that, for all
$k$, $M\leq||t_kz_k^1||$.
Furthermore, for the above step
\begin{equation}\label{normazk}
||z_k^1||\rightarrow||w_k||=||w||.
\end{equation}
This implies that $c$ exists such that $t_k>c>0$ for all $k$.

For the other inequality we must prove that
\begin{equation}\label{Vw2}
\int V(x) w_k^2(x)dx\rightarrow 0 \text{ when }k\rightarrow\infty,
\end{equation}
in fact, fixed an $R>0$ we have
\begin{eqnarray*}
\left|
\int Vw_k^2
\right|&\leq&\int\limits_{B_R}|V|w_k^2+\int\limits_{\R^N\smallsetminus B_R}|V|w_k^2\leq\\
&\leq&||V||_{L^{\frac N2}(B_R)}||w_k||^2_{L^{2^*}(B_R)}+
||V||_{L^{\frac N2}(\R^N\smallsetminus B_R)}||w_k||^2_{L^{2^*}(\R^N)}.
\end{eqnarray*}
We have that $||V||_{L^{\frac N2}(\R^N\smallsetminus B_R)}\rightarrow0$ as
$R\rightarrow\infty$;
furthermore
\begin{equation}
\int\limits_{B_R}w^{2^*}(x-y_k)dx=\int\limits_{B_R(-y_k)}w^{2^*}(s)ds
\leq\int\limits_{\R^N\smallsetminus B_{(|y_k|-R)}}w^{2^*}(s)ds,
\end{equation}
thus $||w_k||^2_{L^{2^*}}\rightarrow0$ as $k\rightarrow\infty$,
that proves (\ref{Vw2}).

Now set a function
\begin{equation}
g_{z_k^1}^V(t):=E_V(tz_k^1).
\end{equation}
Obviously $g_w^0(t)=E_0(tw)=\frac{1}{2}t^2\int|\nabla w|^2-\int f(tw)$.
For Remark \ref{lemma4.1} we
know that there exists a $\bar t$ such that
$g^0_{w_{k}}(\bar t)=g^0_w(\bar t)<0$ for all $k$.
We want to prove that, for $k$ sufficiently big, we have
also $g^V_{z_k^1}(\bar t)<0$.
\begin{eqnarray*}
g^V_{z_k^1}(\bar t)-g^0_w(\bar t)&=&g^V_{z_k^1}(\bar t)-g^0_{w_k}(\bar t)=
\frac{1}{2}\int |\nabla \bar t z_k^1|^2+\frac{1}{2}\int V(x)\bar t^2 (z_k^1)^2-\\
&&-\int f(\bar t z_k^1) -
\frac{1}{2}\int |\nabla \bar t w_k|^2+\int f(\bar t w_k)=\\
&=&\frac{\bar t^2}{2}
\left[\int\! |\nabla z_k^1|^2\!-\!|\nabla w_k|^2\!+\!V(x)(z_k^1)^2\right]\!-\!\!
\int \!f(\bar t z_k^1) \!-\!f(\bar t w_k).
\end{eqnarray*}
By (\ref{normazk}) and by (\ref{Vw2}) we have
that the first integral of the right hand side of the equation vanishes when $k\rightarrow\infty$.
We estimate the last term.
\begin{eqnarray*}
\int f(\bar t z_k^1) -f(\bar t w_k)&=&
\int_{|x-y_k|>\rho_k} f\left(\bar t w_k \chi\left( \frac{x-y_k}{\rho_k}\right)\right) -f(\bar t w_k)=\\
&=&\int_{|s|>\rho_k} f\left(\bar t w \chi\left( \frac{s}{\rho_k}\right)\right) -f(\bar t w)=\\
&=&\!\!\int\limits_{|s|>\rho_k}\!\!\!f'\!\left(\left[\theta \chi\left( \!\frac{s}{\rho_k}\!\right)\!+\!
(1-\theta)\right]\bar t w\right)\left(\chi\left(\! \frac{s}{\rho_k}\!\right)\!- \!1\right)\bar t w =\\
&\leq&\bar t\int\limits_{|s|>\rho_k}\!\!wf'\left(\left[\theta \chi\left(\! \frac{s}{\rho_k}\!\right)\!+\!
(1-\theta)\right]\bar t w\right)\left( \chi\left( \!\frac{s}{\rho_k}\!\right)\!-\! 1\right),
\end{eqnarray*}
so
\begin{equation*}
\left|\int f(\bar t z_k^1) -f(\bar t w_k)\right|\leq
\bar t\!\!\int\limits_{|s|>\rho_k}\!\left|f'\left(\left[\theta \chi\left( \frac{s}{\rho_k}\right)+
(1-\theta)\right]\bar t w\right)\right|\left|  \chi\left( \frac{s}{\rho_k}\right) -1 \right|w
\end{equation*}
and the last term vanishes
when $k\rightarrow \infty$. In fact, for Remark \ref{stimef}, $f'$ is bounded in
$L^{\frac{p}{p-1}}\cap L^{\frac{q}{q-1}}$ and
$\left|  \chi\left( \frac{s}{\rho_k}\right) -1 \right|w\rightarrow 0$ in $L^{2^*}$.
So, for $k_0$ big enough, we have that
\begin{displaymath}
\exists c,C\in\R^+\ \text{s.t. } 0<c<t_k<C \ ,\ \forall k>k_0
\end{displaymath}

\noindent{\bf Step IV} We want to prove that $E_V(t_kz_k^1)\rightarrow\mu_0$.
We have
\begin{eqnarray*}
|E_V(t_kz_k^1)-E_V(w_k)|&=&|E_V'(\theta t_kz_k^1+(1-\theta)w_k)(z_k^1-w_k)|.
\end{eqnarray*}
We know, for Step 2, that $||z_k^1-w_k||_{\D}\rightarrow0$.

Furthermore
\begin{equation}
||\theta t_kz_k^1+(1-\theta)w_k||\leq ||z_k^1||t_k+||w_k||
\end{equation}
that is bounded because $t_k$ is bounded and by Step II.

At this point by Remark \ref{stimef} we get the claim.

We know also that
\begin{eqnarray*}
E_V(w_k)-\mu_0&=&E_V(w_k)-E_0(w)=E_V(w_k)-E_0(w_k)=\\
&=&\int V(x)w_k^2\stackrel{k\rightarrow \infty}{\longrightarrow}0
\end{eqnarray*}
for (\ref{Vw2}). Then
\begin{eqnarray}
|E_V(t_kz_k^1)-\mu_0|&\leq&|E_V(t_kz_k^1)-E_V(w_k)|+|E_V(w_k)-\mu_0|\rightarrow0
\end{eqnarray}
as we wanted to prove.

\noindent{\bf Conclusion}
We know that $t_kz_k\in\Ne_V^\tau$. Then
$$
E_V(t_kz_k)\geq\mu_V^\tau:=\inf_{u\in{\Ne_V^\tau}}E_V(u).
$$
Hence
\begin{displaymath}
\mu_V^\tau\leq E_V(t_kz_k)=E_V(t_kz^1_k)+E_V(t_kz^2_k)\rightarrow2\mu_0=\mu_0^\tau
\end{displaymath}
that gives us the proof.
\end{proof}

We are ready, now, to prove the first result claimed in the introduction.

\begin{proof}[Proof of Theorem \ref{mainVpos}]
First, we prove that
\begin{equation}
\forall u\in\Ne_0\  \exists t_u^Vu\in\Ne_V\ \text{ s.t }\  E_V(t_u^Vu)\geq E_0(u).
\end{equation}
In fact, by Remark \ref{lemma4.1}, we have that
for every $u\in\Ne_0$, there exist $t^V_u > 0$ such that
$t^V_u u\in\Ne_V$. Then we have:
\begin{eqnarray*}
0&=&g'_u(t^V_u u)=\langle \nabla E_0(t^V_uu),u\rangle+t^V_u\int\limits_{\R^N}Vu^2.
\end{eqnarray*}
Since $V > 0$ we have that $\int Vu^2 > 0$ and $\langle \nabla E_0(t^V_uu),u\rangle < 0$.
Hence $t_u^V>t_u^0=1$.
Let us observe that by (\ref{fmu})
the function $s\rightarrow\int \frac 12 f'(su)su-f(su)dx$ is strictly increasing, then,
remembering that $t^V_uu\in\Ne_V$, we have:
\begin{eqnarray*}
E_V(t^V_uu)&=&\frac 12 \int f'(t^V_uu)t^V_uu-f(t^V_uu)dx\geq\\
&\geq&\frac 12 \int f'(u)u-f(u)dx=E_0(u).
\end{eqnarray*}

If $u\in\Ne_0^\tau$, we can prove in the same way that $t_u^Vu\in\Ne_V^\tau$ and that
\begin{equation}
E_V(t_u^Vu)\geq E_0(u)\geq \mu_0^\tau.
\end{equation}
So
\begin{equation}
\inf_{w\in\Ne_V^\tau}E_V(w)=\inf_{u\in\Ne_0^\tau}E_V(t_u^Vu)\geq E_0(u)\geq \mu_0^\tau.
\end{equation}
Theorem \ref{main} provides us the other inequality.

Suppose now that there exists $v\in \Ne_V^\tau$ such that
$\mu_V^\tau =E_V(v)$. We know that $\int V(x)v(x)^2 > 0$ and
\begin{displaymath}
0 = \langle\nabla E_0(v), v\rangle + \int V(x)v(x)^2 dx,
\end{displaymath}
so, consequently  $\langle\nabla E_0(v), v\rangle< 0$.
Then, by Remark \ref{lemma4.1}, we get $t_v^0<t_v^V=1$.
As said before, the function
$s\mapsto \int \frac 12 f '(sv) sv - f (sv) dx$ is
strictly increasing, so we have
\begin{displaymath}
E_0(vt_v^0)=\int\frac 12 f'(t^0_v)t^0_vv- f (t^0_vv)dx <
\int\frac 12 f'(v)v - f(v)dx=E_V(v)=\mu_V^\tau
\end{displaymath}
and we get a contradiction.

\end{proof}

Now we prove the following preliminary result.

\begin{prop}
There exists a class of potential $V(x)$ such that
$\mu_V^\tau<\mu_0^\tau$.
\end{prop}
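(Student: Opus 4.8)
The plan is to exhibit a concrete potential from the family $V_y$ defined in (\ref{defVy}) and show that for this choice the strict inequality $\mu_V^\tau<\mu_0^\tau=2\mu_0$ holds. Since $V_y\leq 0$ everywhere (indeed $a|x-y|-1\leq a-1<0$ on the relevant balls when $a$ is small, and $V_y=0$ elsewhere), we expect the potential term to lower the energy of suitable test functions. The natural test functions are the ones already built in the proof of Lemma \ref{main}: take $w$ realizing $\mu_0=E_0(w)$, and form
\begin{equation*}
z_k=w(x-y_k)\chi\!\left(\tfrac{x-y_k}{\rho_k}\right)-w(\tau x-y_k)\chi\!\left(\tfrac{x-\tau y_k}{\rho_k}\right),
\end{equation*}
but now centered so that $y_k=y$ is the \emph{same} point appearing in the definition of $V_y$. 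One should actually translate the problem so that the bumps of $z_k$ sit exactly on the support of $V_y$; equivalently, work with $V_{y_k}$ for a sequence $y_k$ with $|y_k|,|y_k-\tau y_k|\to\infty$ and bumps centered at $y_k,\tau y_k$. By Remark \ref{lemma4.1} there is $t_k>0$ with $t_kz_k\in\Ne_{V}^\tau$, and by the disjoint-support decomposition $E_V(t_kz_k)=2E_V(t_kz_k^1)$.

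The key computation is to compare $E_V(t_kz_k^1)$ with $\mu_0$. Following Steps II--IV of Lemma \ref{main} verbatim, we have $\|z_k^1-w_k\|_{\D}\to 0$, the scaling parameter $t_k$ stays in a compact subinterval of $(0,\infty)$, and hence $E_V(t_kz_k^1)=E_{V}(w_k)+o(1)$ where $w_k(x)=w(x-y_k)$. The crucial difference from Lemma \ref{main} is the sign of the potential term:
\begin{equation*}
E_V(w_k)-\mu_0=\frac12\int V(x)w_k^2(x)\,dx=\frac12\int_{B_1(y_k)\cup B_1(\tau y_k)}\bigl(a|x-y_k|-1\bigr)\text{-type terms}\cdot w_k^2\,dx.
\end{equation*}
Because $w$ is a fixed positive ground state with $w>0$ on a neighbourhood of the origin, $\int_{B_1}(a|s|-1)w^2(s)\,ds$ is a fixed strictly negative number $-\delta<0$ (for $a$ small), independent of $k$; translating, $\int V w_k^2\,dx=-\delta+o(1)$ as $|y_k|\to\infty$ (the $o(1)$ accounting for the mismatch between the exact support of $V_{y_k}$ and the region where $w_k$ is concentrated — but since $V_{y_k}$ is \emph{exactly} centered at $y_k$, this is in fact $=-\delta$ exactly up to the $\chi$ cut-off correction, which is $o(1)$). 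Therefore $E_V(t_kz_k^1)\le \mu_0-\tfrac{\delta}{2}+o(1)$, and so
\begin{equation*}
\mu_V^\tau\le E_V(t_kz_k)=2E_V(t_kz_k^1)\le 2\mu_0-\delta+o(1)<2\mu_0=\mu_0^\tau
\end{equation*}
for $k$ large. This yields the claim.

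The main obstacle is making the $o(1)$ terms and the sign-definiteness of $\int V w_k^2$ genuinely uniform: one must check that the error introduced by replacing $z_k^1$ with $w_k$ (handled by Remark \ref{stimef} and the $C^2$-regularity of $E_V$, exactly as in Step IV), the boundedness of $t_k$ away from $0$ and $\infty$ (Step III, using Remark \ref{normanehari} and that $\int V w_k^2\to 0$ — note here $V$ itself is compactly supported so this limit is immediate once the bumps escape to infinity relative to any fixed ball, but since $V$ moves with $y_k$ one instead uses directly that $|\int V_{y_k}w_k^2|\le \|V_{y_k}\|_{L^{N/2}}\|w\|_{L^{2^*}}^2$ is a fixed small constant), and the cut-off correction $\int(\chi^2-1)(\cdot)V w_k^2$ all combine to leave a strictly negative leading term. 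A clean way to organize this is to fix $a$ once and for all so small that both $\|V_y\|_{L^{N/2}}<S$ and the fixed negative quantity $\int_{B_1}(a|s|-1)w^2(s)\,ds<0$ hold, and then run the estimates of Lemma \ref{main} with this $V$ in place of the abstract $V\ge 0$, simply tracking that every place where Lemma \ref{main} used $\int V w_k^2\to 0$ now instead contributes a controlled strictly negative amount.
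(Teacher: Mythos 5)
Your overall strategy coincides with the paper's: the same family $V_y$, the same idea of parking the two ground--state bumps on the two wells and letting the negative potential term push the energy strictly below $2\mu_0=\mu_0^\tau$. But the central step fails as written. You invoke Steps II--IV of Lemma \ref{main} "verbatim" to conclude $E_V(t_kz_k^1)=E_V(w_k)+o(1)$. That identification rests on $t_k\to1$, which in Lemma \ref{main} comes precisely from $\int Vw_k^2\to0$. Here, by construction, $\int V_{y_k}w_k^2\to\int_{B_1}(a|s|-1)w^2(s)\,ds=-\delta<0$, so the limiting fibering map is $h(t)=E_0(tw)-\tfrac{t^2}{2}\delta$; since $h'(1)=\langle\nabla E_0(w),w\rangle-\delta=-\delta<0$, its unique maximizer is some $\bar t<1$, and $t_k\to\bar t\neq1$. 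From $\|z_k^1-w_k\|_{\D}\to0$ and $t_k$ bounded you only get $E_V(t_kz_k^1)=E_V(t_kw_k)+o(1)\to h(\bar t)$, not $h(1)=\mu_0-\tfrac{\delta}{2}$. Worse, your displayed inequality $E_V(t_kz_k^1)\le\mu_0-\tfrac{\delta}{2}+o(1)$ points the wrong way: $t_k$ is the \emph{maximizer} of $t\mapsto E_V(tz_k^1)$, so the left-hand side is $\ge E_V(1\cdot z_k^1)=\mu_0-\tfrac{\delta}{2}+o(1)$. You have established a lower bound where an upper bound is needed, so the chain $\mu_V^\tau\le 2E_V(t_kz_k^1)\le 2\mu_0-\delta+o(1)$ does not go through.

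The conclusion is still reachable, and the missing piece is exactly the paper's Steps II--III: identify the limit $\bar t$ of the maximizers, check $0<\bar t<1$ with $\bar t$ bounded away from $0$ (Remark \ref{normanehari}), and then bound the limiting maximum by $h(\bar t)=E_0(\bar tw)-\tfrac{\bar t^2}{2}\delta<E_0(w)=\mu_0$, using that $t=1$ maximizes $E_0(tw)$ and that the negative term does not degenerate because $\bar t>0$. In other words, you must replace "evaluate the fiber at $t=1$" by "control the maximum of the perturbed fiber". The remaining differences are cosmetic: the paper works with $z_y=w(\cdot-y)-w(\cdot-\tau y)$ without cut-offs and estimates the cross terms directly, while your cut-off version and your (correct) observation that $\int V_{y_k}w_k^2$ converges to a fixed negative constant rather than to $0$ are both fine.
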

\begin{proof}
We consider the class of potentials defined in (\ref{defVy}).
We want to show that, when $|y|\rightarrow\infty$ and $|y-\tau y|\rightarrow\infty$,
then $\mu_{V_y}^\tau<\mu_0^\tau$.
We prove it by steps.

Take $w\in\Ne_0$ such that $E_0(w)=\mu_0$, $w$ radially symmetric and $w>0$
(see \cite{BM04,BL83a,BL83b}). By means of
$w$, we define
\begin{equation}
z_y(x)=w(x-y)-w(x-\tau y).
\end{equation}
\noindent{\bf Step I }
We prove that, for $|y- \tau y|\rightarrow\infty$,
\begin{equation*}
g_{z_y}^{V_y}(t)\rightarrow t^2\int|\nabla w(x)|^2dx
+at^2\int[|x|-1]w^2(x)dx-2\int f(tw(x))dx,
\end{equation*}

where
\begin{equation}
g_{z_y}^{V_y}(t)=E_{V_y}(tz_y)=
\int\limits_{\R^N}\frac{t^2}{2}(|\nabla z_y|^2+V_yz_y^2)-f(tz_y)dx.
\end{equation}
Now
\begin{equation}
\frac{t^2}{2}\!\!\int|\nabla z_y|^2=
\frac{t^2}{2}\!\!\int|\nabla w(x-y)|^2+
|\nabla w(\tau x-y)|^2+
\nabla w(x-y)\nabla w(x-\tau y).
\end{equation}
After a change of variables, the first two terms are equals to
$\frac{t^2}{2}\int|\nabla w|^2$, and the last term vanishes.
So we have that, for all $t$,
\begin{eqnarray}
\frac{t^2}{2}\!\!\int|\nabla z_y|^2\rightarrow
t^2\int|\nabla w|^2&&\text{ when }|y-\tau y|\rightarrow\infty.
\end{eqnarray}
In a similar way consider
\begin{eqnarray}
\nonumber
\int V_yz_y^2&=&a\int\limits_{|x-y|<1}(|x-y|-1)z_y^2+
a\int\limits_{|x-\tau y|<1}(|x-\tau y|-1)z_y^2=\\
\label{Vy}
&=&a\int\limits_{|x-y|<1}(|x-y|-1)[w(x-y)-w(x-\tau y)]^2+\\
\nonumber&&+a\int\limits_{|x-\tau y|<1}(|x-\tau y|-1)[w(x-y)-w(x-\tau y)]^2.
\end{eqnarray}
By means of a change of variables we obtain
\begin{eqnarray*}
\int\limits_{|x-y|<1}(|x-y|-1)z_y^2&=&
\int\limits_{|s|<1}(|s|-1)[w(s)+w(s+y-\tau y)]^2.
\end{eqnarray*}
It is not difficult to prove that
\begin{eqnarray}
\int\limits_{|s|<1}(|s|-1)w^2(s+y-\tau y)\rightarrow0;&&\\
\int\limits_{|s|<1}(|s|-1)w(s)w(s+y-\tau y)\rightarrow0.&&
\end{eqnarray}
In the same way we proceed for
the second term of the (\ref{Vy}),
obtaining
\begin{equation}
\frac{t^2}{2}
\int V_vz_y^2\rightarrow at^2\int\limits_{|x|<1}(|x|-1)w^2(x)dx
\text{ when }|y|\rightarrow\infty.
\end{equation}
We have to estimate now $\int f(tw)$. Fixed an $R>0$, we have
\begin{equation}
\int f(tz_y)=\int\limits_{B_R(y)}f(tz_y)+\int\limits_{B_R(\tau y)}f(tz_y)+
\int\limits_{(B_R(y)\cup B_R(\tau y))^C}f(tz_y).
\end{equation}
For the first term we have
\begin{eqnarray*}
\int\limits_{B_R(y)}\!\!\!f(tz_y)&=&\int\limits_{B_R}f(t w(s)-t w(s+y-\tau y))=
\int\limits_{B_R}f(t w)+\\
&&+\!\!\int\limits_{B_R}\!f'(\theta t w(s)+(1-\theta)t w(s+y-\tau y))
[t w(s+y-\tau y)],
\end{eqnarray*}
for some $\theta \in [0,1]$.

Now, for Remark \ref{stimef}, we have that
$f'(\theta tw(x-y)+(1-\theta)tw(\tau x-y))$ is bounded in
$L^{p'}\cap L^{q'}$, in fact
$\theta tw(x-y)+(1-\theta)tw(\tau x-y)$ is bounded in $\D$ and so in $L^{p}+ L^{q}$.
Furthermore,
$w(s+y-\tau y)\rightarrow0$ strongly in $L^P(B_R)$ when $|y-\tau y|\rightarrow +\infty$.

Concluding we get
\begin{equation}
\int\limits_{B_R(y)}\!\!\!f(tz_y)=
\int\limits_{B_R}f(tw(s))ds+I_1(R,y),
\end{equation}
where, given $R>0$, $I_1(R,y)\rightarrow0$ when $|y-\tau y|\rightarrow\infty$.
In the same way we can conclude that
\begin{equation}
\int\limits_{B_R(\tau y)}\!\!\!f(tz_y)=
\int\limits_{B_R}f(tw(s))ds+I_2(R,y),
\end{equation}
where, again, given $R>0$, $I_2(R,y)\rightarrow0$ when $|y-\tau y|\rightarrow\infty$.

For the last term we have that there exist a $\theta \in [0,1]$ such that
\begin{eqnarray*}
\int\limits_{(B_R(y)\cup B_R(\tau y))^C}\!\!\!\!\!\!\!\!\!\!\!f(tz_y)\!\!&=&
\!\!\!\int\limits_{(B_R(y)\cup B_R(\tau y))^C}
\!\!\!\!\!\!\!\!\!\!\!f(t(w(x-y)))+\\
&&+\!\!\!\!\!\!\!\!\!\!\int\limits_{(B_R(y)\cup B_R(\tau y))^C}
\!\!\!\!\!\!\!\!\!\!\!\!\!\!\!
f'(\theta t(w(x\!-\!y))\!-\!\!(1\!-\!\theta) w(x\!-\!\tau y))t w(x\!-\!\tau y)=\\
&=&
\!\!\!\int\limits_{(B_R\cup B_R(y- \tau y))^C}
\!\!\!\!\!\!\!\!\!\!\!f(t(w(s)))+\\
&&+\!\!\!\!\!\!\!\!\!\!\int\limits_{(B_R\cup B_R(\tau y -y))^C}
\!\!\!\!\!\!\!\!\!\!\!
f'(\theta t(w(\xi+\tau y-y))-(1-\theta) w(\xi))t w(\xi).
\end{eqnarray*}
Now,
\begin{equation}
\left|\
\int\limits_{(B_R\cup B_R(\tau y -y))^C}
\!\!\!\!\!\!\!\!\!\!\!
f'(\cdot)tw(\xi)\
\right| \leq t ||f'(\cdot)||_{L^{p'}\cap L^{q'}(\R^N)}
||w||_{L^{p}+ L^{q}(\R^N\smallsetminus B_R)},
\end{equation}
and we use  that $||w||_{{\D}(\R^N\smallsetminus B_R)}$ goes to zero
when $R\rightarrow\infty$ and that
$||f'(\cdot)||_{L^{p'}\cap L^{q'}(\R^N)}$ is bounded  by Remark \ref{stimef}.

At this point we have that

\begin{eqnarray}
\int f(tz_y)\rightarrow2\int f(w)&&
\text{when } |y-\tau y|\rightarrow\infty,
\end{eqnarray}
and we get the claim.

\noindent{\bf Step II }
There exists a $\bar t<1$ such that
\begin{eqnarray}
t^{V_y}_{z_y}\rightarrow\bar t&&
\text{when } |y-\tau y|\rightarrow\infty,
\end{eqnarray}
where $t^{V_y}_{z_y}$ is the maximum point of $g^{V_y}_{z_y}(t)$.

We set
\begin{equation}
\varphi(t)=t^2\int|\nabla w(x)|^2dx
+at^2\int[|x|-1]w^2(x)dx-2\int f(tw(x))dx.
\end{equation}
By Remark \ref{lemma4.1} there exists a unique maximizer $\bar t>0$
for the function $\varphi(t)$.

We know that
\begin{equation}
g^0_w(t)=\frac 12 t^2\int\nabla w^2-\int f(tw)
\end{equation}
reach its maximum for $t=1$.
Thus the maximum of the function
\begin{equation}
\varphi(t)=2 g^0_w(t)+a t^2\int[|x|-1]w^2(x)dx
\end{equation}
is achieved for $\bar t $, with $0<\bar t<1$.

Given $t_1<\bar t<t_2$, we can choose a $\delta>0$ such that
\begin{equation}
\max\{\varphi(t_1),\varphi(t_2)\}+\delta<\varphi(\bar t)-\delta.
\end{equation}
By Step I, for $|y-\tau y|$ sufficiently large, we obtain
\begin{equation}
g_{z_y}^{V_y}(t_i)<\varphi(t_i)+\delta<\varphi(\bar t)-\delta<g_{z_y}^{V_y}(\bar t).
\end{equation}
By Remark \ref{lemma4.1}, we know that $g_{z_y}^{V_y}(t)$ has an unique maximum point
$t_{z_y}^{V_y}$, thus
we conclude that
\begin{equation}
t_1<t_{z_y}^{V_y}<t_2.
\end{equation}
Since $t_1$ and $t_2$ are arbitrarily chosen, we get the claim.

\noindent{\bf Step III }
For $|y- \tau y|$ sufficiently large we have
\begin{equation}
\mu_{V_y}^\tau<\mu_0^\tau.
\end{equation}

We know that
\begin{eqnarray}\label{energia}
E_{V_y}(t^{V_y}_{z_y}\ z_y)=g_{z_y}^{V_y}(t_{z_y})\rightarrow \varphi(\bar t)&&
\text{for }|y-\tau y|\rightarrow\infty,
\end{eqnarray}
in fact, for all $\eps>0$ we have that, for $|y-\tau y|$ sufficiently large,
\begin{eqnarray*}
|g_{z_y}^{V_y}(t_{z_y})-\varphi(\bar t)|&=&
|g_{z_y}^{V_y}(t_{z_y})-g_{z_y}^{V_y}(\bar t)+g_{z_y}^{V_y}(\bar t)-\varphi(\bar t)|\leq\\
&\leq&|g_{z_y}^{V_y}(t_{z_y})-g_{z_y}^{V_y}(\bar t)|+|g_{z_y}^{V_y}(\bar t)-\varphi(\bar t)|\leq \eps.
\end{eqnarray*}
By Step I the second term goes to zero when
$|y-\tau y|\rightarrow\infty$. By Step II, $t^{V_y}_{z_y}\rightarrow \bar t$, so,
arguing as in Step I, we get the claim.
We observe that
\begin{eqnarray*}
\varphi(\bar t) &=&
\bar t\int|\nabla w|^2+ a\bar t^2\int[|x|-1]w^2-2\int f(\bar t w)<\\
&<& 2E_0(\bar t w)<2\mu_0,
\end{eqnarray*}
because $\bar t<1$ and $E_0(w)=\mu_0$. By (\ref{energia}) we get
\begin{eqnarray}
\mu_{V_y}^\tau\leq E_{V_y}(t^{V_y}_{z_y}\ z_y)<2\mu_0=\mu_0^\tau
&&\text{for }|y-\tau y|\text{ large enough},
\end{eqnarray}
that concludes the proof
\end{proof}

Now we are ready to prove the second result claimed in the introduction.
\begin{proof}[Proof of theorem \ref{mainVneg}]
By the Splitting Lemma and the above Proposition, we get
the existence of a minimizer for $E_{V_y}$,
for the class of potential
$V_y$ defined by (\ref{defVy}), when $|y-\tau y|$ large enough.

Let $\omega$ be this minimizer.
We know that $\omega$ changes sign, because it is antisymmetric by construction.
We have to prove that
$\omega$ changes sign exactly once.
Suppose that the set $\{x\in\R^N\ :\ \omega(x)>0 \}$ has $k$ connected
components $\Omega_1, \dots, \Omega_k$.
Set
\begin{equation}
\omega_i=\left\{
\begin{array}{lll}
\omega(x)&&  x\in\Omega_i\cup\tau\Omega_i;\\
0&&\text{elsewhere}
\end{array}
\right.
\end{equation}
For all $i$, $\omega_i\in \Ne_{V_y}^\tau$.
Furthermore we have
\begin{equation}
E_{V_y}(\omega)=\sum\limits_iE_{V_y}(\omega_i),
\end{equation}
thus
\begin{equation}
\mu_{V_y}^\tau=E_{V_y}(\omega)=\sum\limits_{i=1}^k E_{V_y}(\omega_i)\geq k\mu_{V_y}^\tau,
\end{equation}
so $k=1$, that concludes the proof.
\end{proof}

\nocite{Ne60}
\nocite{BGM04b}
\nocite{ABC97}

\providecommand{\bysame}{\leavevmode\hbox to3em{\hrulefill}\thinspace}

\end{document}